\documentclass{siamart190516}
\usepackage[utf8]{inputenc}
\usepackage[T1]{fontenc}

\usepackage{algpseudocode}
\usepackage{caption}
\usepackage{enumitem}
\usepackage{float}
\usepackage{ifthen}
\usepackage{math}
\usepackage[numbers, sort&compress]{natbib}
\usepackage{notation}
\usepackage{subcaption}
\usepackage{tikzconfig}
\usepackage{url}
\usepackage[normalem]{ulem}

\usepackage[a-1b]{pdfx}

\newcommand{\blue}[0]{}

\newboolean{IsArxiv}
\setboolean{IsArxiv}{true}

\title{An Algorithm for Local Transverse Feedback Linearization}
\author{%
  Rollen S. D'Souza%
  \thanks{%
    Department of Electrical and Computer Engineering, University of Waterloo, Waterloo, ON, Canada.
    (\email{rollen.dsouza@uwaterloo.ca}, \email{cnielsen@uwaterloo.ca})%
    \funding{Rollen S. D'Souza is supported by the Ontario Graduate Scholarship (O.G.S.).}
  }
  \and
  Christopher Nielsen\footnotemark[1]
}
\date{\today}

\begin{document}
\bibliographystyle{siamplain}

\maketitle
\begin{abstract}
  Given a multi-input, nonlinear, time-invariant, control-affine system and a closed, embedded submanifold \texorpdfstring{\(\Manifold{N}\)}{N}, the local transverse feedback linearization (TFL) problem seeks a coordinate and feedback transformation such that, in transformed coordinates, the dynamics governing the system's transverse evolution with respect to \texorpdfstring{\(\Manifold{N}\)}{N} are linear, time-invariant and controllable.
  The transformed system is said to be in the TFL normal form.
  Checkable necessary and sufficient conditions for this problem to be solvable are known, but, unfortunately, the literature does not present a prescription that constructs the required transformation for multi-input systems.
  In this article we present an algorithm that produces a virtual output of suitable vector relative degree that, using input-output feedback linearization, puts the system into TFL normal form.
  The procedure is based on dual conditions for TFL and is fundamentally different from existing methods, such as the GS and Blended algorithms, because of the ``desired'' zero dynamics manifold \texorpdfstring{\(\Manifold{N}\)}{N}.
  The proposed algorithm is the first to take into consideration the desired zero dynamics.
\end{abstract}
\begin{keywords}
  transverse feedback linearization, set stabilization, feedback linearization, zero dynamics, normal forms, nonlinear geometric control, multi-input systems, Pfaffian systems
\end{keywords}
\begin{AMS}
  37N35, 93B10, 93B27 
\end{AMS}

\section{Introduction}
\label{sec:intro}
Nonlinear control theory has a long geometric tradition.
Hermann first injected geometric ideas into control theory when studying the accessibility problem~\cite{Hermann1963}.
Within decades, a flurry of articles from a number of notable academics used the language of differential geometry to aid in the design of feedback linearizing controllers, ascertain controllability, and find controlled-invariant sets and distributions~\cite{Hermann1981, Hermann1988, Hermann1989, Hirschorn1981, Brockett1983}.
These results have since been synthesized into mainstream nonlinear control texts like~\cite{Sastry1999, Isidori1995, Nijmeijer2016}.

The exact full-state feedback linearization problem is one issue tackled and resolved by taking a geometric approach.
The problem asks whether there exists a change of coordinates \(\xi = \Phi(x)\) and feedback transformation \(v = \alpha(x) + \beta(x)\, u\) that locally transforms the nonlinear control-affine system
\[
  \dot{x}(t) = f(x(t)) + g(x(t))\, u(t),
\]
into a linear, controllable system
\[
  \dot{\xi}(t) = A\, \xi(t) + B\, v(t).
\]
Hunt, Su and Meyer presented in~\cite{HuntSuMeyer1983} necessary and sufficient conditions upon which such a transformation exists.
If the conditions are satisfied, then the transformation \(\Phi\) can be found by solving a large system of partial differential equations (PDE).

\begingroup\blue
The difficulty solving this system of PDEs was clear, and within a few years attempts were made to alleviate the difficulty.
The first attempt was made in~\cite{Gardner1991} by Gardner and
Shadwick in the case where the controllability indices of \((A, B)\) in the target system are distinct.
They completed their work in~\cite{Gardner1992} for the general case.
The GS algorithm, their proposed approach, minimizes the number of integrations required to find the transformation \(\Phi.\)
\endgroup
This remarkable work soon inspired attempts at algorithmic solutions to other nonlinear control design problems such as dynamic feedback linearization~\cite{Tilbury1994, Aranda-Bricaire1995} and differential flatness~\cite{Schoberl2014}.

An interesting problem arose in the late 1990s with application to motion control problems:  given a controlled invariant submanifold \(\Manifold{N}\) of the state-space, when is it possible to feedback linearize those dynamics that act transverse\footnote{\blue The dynamics are not tangent to the set.} to \(\Manifold{N}\)?
This problem, known as the transverse feedback linearization (TFL) problem, differs from partial feedback linearization in that it \emph{starts} with a desired zero dynamics manifold \(\Manifold{N}\) and asks to find a state and feedback transformation that locally linearizes dynamics transverse to it.
When \(\Manifold{N}\) is an orbit, necessary and sufficient conditions upon which this could be done was found by Banaszuk and Hauser in~\cite{Banaszuk1995}.
Nielsen and Maggiore then presented conditions for when \(\Manifold{N}\) is a general submanifold~\cite{Nielsen2008}.
They also suggested a semi-constructive procedure that can be used to find the aforementioned state and feedback transformation in simple cases.

Unfortunately, their procedure relies on the existence of an integrable distribution adapted to the tangent space of \(\Manifold{N}\) and to the specific system structure.
Finding this distribution can be difficult even in the single-input case as demonstrated in~\cite{DSouza2021}.
Inspired by~\cite{Gardner1992}, the authors of this work sought an algorithmic procedure that could, at a minimum, produce the integrable distribution.
The algorithm for the single-input case was presented in~\cite[Remark 2]{DSouza2021}.
This article generalizes that work and presents an algorithm that produces the state and feedback transformation in the multi-input case.

The algorithm we propose possesses a number of features of note.
First, it produces its own certificate: the transverse output for which input-output feedback linearization can be performed to locally feedback linearize the dynamics transverse to \(\Manifold{N}.\)
This is in-line with known algorithms for exact full-state feedback linearization.
Secondly, the algorithm provides a geometric intuition for the algebraic adaptation process performed on the derived flag.
We show that the adaptation process amounts to producing a sequence of descending zero dynamics manifolds converging upon the desired zero dynamics manifold \(\Manifold{N}.\)
This not only gives a geometric perspective to our algorithm, but also to the GS algorithm~\cite{Gardner1992} and ``Blended Algorithm''~\cite{Mehra2014} since, when \(\Manifold{N} = \{ x_0 \}\) is a one-point set, the TFL problem coincides with the exact full-state feedback linearization problem~\cite[Corollary 3.3]{Nielsen2008}.
Unlike previously published algorithms, the proposed algorithm begins with a desired zero dynamics chosen \emph{a-priori} and finds a virtual output (the so-called transverse output) so that its zero dynamics coincide with the desired zero dynamics.
This is something that \emph{cannot} be done with the GS and Blended algorithms.
This is also not possible with the algorithm for partial feedback linearization presented in~\cite{Marino1986} as the resulting zero dynamics manifold is not fixed before-hand;
the resulting zero dynamics is not guaranteed to equal \(\Manifold{N}.\)
This article directly addresses this with the proposed algorithm, leveraging the conditions under which a desired zero dynamics manifold may be transverse feedback linearized, that produces the required output for the desired zero dynamics manifold.

\begingroup\blue
The ability to find an output with a well-defined relative degree whose associated zero dynamics equals a given set is an important contribution to control problems whose specification demands the state be driven to the aforementioned set.
A notable example is path following for robotic systems; this specification can be cast as the problem of driving the system state to a controlled invariant set describing all the motions of the robot along the path~\cite{Gill2015, DSouza2021c}.

Another example is safety control;
safety is enforced by making an appropriate set controlled invariant and control barrier functions (CBFs) are widely used~\cite{Ames2019}.
The results of this article can be used to compute functions with a well-defined relative degree on the boundary of the safety set --- even when the CBF that defines the safety specification doesn't.
Our results, combined with recent work on CBFs with higher relative degree~\cite{Nguyen2016, Xiao2019}, may find applications in safety critical control.
\endgroup

This article is organized as follows.
After presenting our notation, we formally introduce the TFL problem in Section~\ref{sec:tfl}.
Section~\ref{sec:mathbg} then presents a preliminary lifting of the problem into a setting that includes time and control in a way similar to both~\cite{Gardner1992} and~\cite{DSouza2021}.
\begingroup\blue
Section~\ref{sec:main} then presents the conditions for multi-input TFL in the exterior differential setting and proves these conditions are necessary.
The supporting results are developed in Section~\ref{sec:tech}.
Section~\ref{sec:proposedalgorithm} then returns to prove that the conditions for TFL are sufficient using the proposed algorithm.
The proposed algorithm itself is simplified and presented in Section~\ref{sec:thealgorithm}.
We conclude our work with a non-trivial academic example that demonstrates our algorithm.
\endgroup

\subsection{Notation}
\label{sec:notation}
\begingroup\blue
The set of natural numbers is denoted by \(\Naturals\) and the set of real numbers by \(\Real.\)
If \(A\) is a finite set, then \(\Card(A)\) denotes its cardinality.
If \(\Manifold{M}\) is a smooth \((\CInfty)\) differentiable manifold of dimension \(m,\) then we denote by \(\CInfty(\Manifold{M})\) the ring of smooth real-valued functions on \(\Manifold{M}.\)
If \(p \in \Manifold{M},\) then \(\TangentSpaceAt{\Manifold{M}}{p}\) denotes the tangent space at the point \(p\) and its dual, the cotangent space, is denoted \(\CotangentSpaceAt{\Manifold{M}}{p}.\)
The tangent and cotangent bundles of \(\Manifold{M}\) are written \(\TangentSpaceOf{\Manifold{M}}\) and \(\CotangentSpaceOf{\Manifold{M}}\) respectively.
If \((\OpenSet{U};\) \(x^1,\) \(\ldots,\) \(x^m)\) is a chart of \(\Manifold{M},\) then for each \(p \in \OpenSet{U}\) the basis of vectors for \(\TangentSpaceAt{\Manifold{M}}{p}\) induced by the chart is denoted by \(\left.\derivation{x^1}\right|_p,\) \(\ldots,\) \(\left.\derivation{x^m}\right|_p.\)
The vector fields \(\derivation{x^1},\) \(\ldots,\) \(\derivation{x^m}\) form a local frame for \(\TangentSpaceOf{\Manifold{M}}.\)
The unique dual basis for \(\CotangentSpaceAt{\Manifold{M}}{p}\) induced by the chart is denoted \(\D x^1_p,\) \(\ldots,\) \(\D x^m_p.\)
If \(H: \Manifold{M} \to \Manifold{N}\) is a smooth map between manifolds, then the pushforward at \(p\) is \(\left.\J H\right|_p: \TangentSpaceAt{\Manifold{M}}{p} \to \TangentSpaceAt{\Manifold{N}}{H(p)}\) and the pullback at \(p\) is the dual map \(\left.\J H\right|_p^*: \CotangentSpaceAt{\Manifold{N}}{H(p)} \to \CotangentSpaceAt{\Manifold{M}}{p}.\)

The set of smooth sections of \(\TangentSpaceOf{\Manifold{M}}\) is denoted by \(\VectorFieldsOn{\Manifold{M}},\) whose elements define vector fields on \(\Manifold{M},\) and the set of smooth sections of \(\CotangentSpaceOf{\Manifold{M}}\) is denoted by \(\DifferentialForms{\Manifold{M}},\) whose elements define covector fields (smooth one-forms) on \(\Manifold{M}.\)
The set \(\VectorFieldsOn{\Manifold{M}}\) (resp. \(\DifferentialForms{\Manifold{M}}\)) is a real vector space, but can also be endowed with the structure of a module over the ring \(\CInfty(\Manifold{M}).\)
Let \(k \in \Naturals \cup \{0\}.\)
Define the set of smooth \(k\)-forms \(\SectionsOf{\Forms^k \CotangentSpaceOf{\Manifold{M}}}\) as sections of the bundle of \(k\)-forms and the space of all smooth forms as \(\SectionsOf{\Forms \CotangentSpaceOf{\Manifold{M}}} \defineas \bigoplus_{k=0}^{m} \SectionsOf{\Forms^k \CotangentSpaceOf{\Manifold{M}}}.\)
Let \(X \in \VectorFieldsOn{\Manifold{M}}\) and \(\omega \in \SectionsOf{\Forms^k \CotangentSpaceOf{\Manifold{M}}}.\)
The notation \(X_p \in \TangentSpaceAt{\Manifold{M}}{p}\) (\(\omega_p \in \Forms^k \CotangentSpaceAt{\Manifold{M}}{p}\)) denotes the vector \(X\) (\(k\)-form \(\omega\)) in the tangent space (forms over the cotangent space) at \(p \in \Manifold{M}.\)

If \(\Dist{G} \subseteq \TangentSpaceOf{\Manifold{M}}\) is a distribution, then its restriction to \(p \in \Manifold{M}\) is denoted \(\Dist{G}_p \subseteq \TangentSpaceAt{\Manifold{M}}{p}.\)
The set of covectors that annihilate vectors in \(\Dist{G}_p\) is \(\Ann(\Dist{G}_p)\) \(\subseteq\) \(\CotangentSpaceAt{\Manifold{M}}{p}.\)
Define \(\Ann(\Dist{G})\) \(\defineas\) \(\sqcup_{p \in \Manifold{M}} \Ann(\Dist{G}_p)\) \(\subseteq\) \(\CotangentSpaceOf{\Manifold{M}}.\)
If \(\Codist{I} \subseteq \CotangentSpaceOf{\Manifold{M}}\) is a codistribution, then its restriction to \(p\) is denoted \(\Codist{I}_p \subseteq \CotangentSpaceAt{\Manifold{M}}{p}.\)

When a distribution \(\Dist{D} \subseteq \TangentSpaceOf{\Manifold{M}}\) is smooth and regular, it is a subbundle of \(\TangentSpaceOf{\Manifold{M}}\) and, as such, can be associated to a \(\CInfty(\Manifold{M})\)-submodule of vector fields \(\Module{D}\) \(\defineas\) \(\SectionsOf{\Dist{D}}\) \(\subseteq\) \(\VectorFieldsOn{\Manifold{M}}\) comprising of smooth sections \(X \in \VectorFieldsOn{\Manifold{M}}\) that satisfy \(X_p \in \Dist{D}_p\) for all \(p \in \Manifold{M}.\)

If \(\Module{D} \subseteq \VectorFieldsOn{\Manifold{M}}\) is a \(\CInfty(\Manifold{M})\)-submodule and \(p \in \Manifold{M},\) then to this object we associate a distribution defined pointwise by,
\[
  \Dist{D}_p \defineas \{ X_p \colon X \in \Module{D} \} \subseteq \TangentSpaceAt{\Manifold{M}}{p}.
\]
We say a smooth \(k\)-form \(\omega\) annihilates \(\Module{D} \subseteq \VectorFieldsOn{\Manifold{M}}\) if it evaluates to zero when all of its arguments are vector fields from \(\Module{D}.\)
The set of smooth forms that annihilate vector fields in \(\Module{D}\) is \(\Ann(\Module{D}) \subseteq \SectionsOf{\Forms \CotangentSpaceOf{\Manifold{M}}}.\)
If \(\Ideal{I} \subseteq \SectionsOf{\Forms \CotangentSpaceOf{\Manifold{M}}}\) and \(p \in \Manifold{M},\) then to this object we associate a codistribution defined pointwise by,
\[
  \Codist{I}_p \defineas 
    \{
      \omega_p \colon
      \omega \in \Ideal{I} \cap \SectionsOf{\CotangentSpaceOf{\Manifold{M}}}
    \}
  \subseteq \CotangentSpaceAt{\Manifold{M}}{p}.
\]

Given two smooth vector fields \(X,\) \(Y \in \VectorFieldsOn{\Manifold{M}},\) their Lie bracket is \([X, Y] \in \VectorFieldsOn{\Manifold{M}}.\)
If \(\Module{D}, \Module{G} \subseteq \VectorFieldsOn{\Manifold{M}}\) are submodules, then 
\[
  [\Module{D}, \Module{G}] \defineas 
    \{
      [X, Y]
      \colon
      X \in \Module{D}, Y \in \Module{G}
    \}
  \subseteq \VectorFieldsOn{\Manifold{M}}.
\]
Similarly, if \(\Dist{D}, \Dist{G} \subseteq \TangentSpaceOf{\Manifold{M}}\) are distributions, then we can define their Lie bracket pointwise,
\[
  [\Dist{D}, \Dist{G}]_p \defineas 
    \{
      \left.[X, Y]\right|_p
      \colon
      X \in \Module{D}, Y \in \Module{G}
    \}
  \subseteq \TangentSpaceAt{\Manifold{M}}{p}.
\]
If \(\Dist{D} \subseteq \TangentSpaceOf{\Manifold{M}}\) is a distribution, then the involutive closure is \(\Inv(\Dist{D}).\)
As a matter of convenience, repeated Lie brackets are compressed using the following notation.
Let \(X,\) \(Y \in \VectorFieldsOn{\Manifold{M}}\) and define \(\Ad_X^0 Y \defineas Y\) and \(\Ad_X^1 Y \defineas [X, Y].\)
Recursively define \(\Ad_X^k Y \defineas [X, \Ad_X^{k - 1} Y]\) for all \(k > 1.\)

Let \(\ell \in \Naturals \cup \{ 0 \}.\)
If \(\omega \in \SectionsOf{\Forms^k \CotangentSpaceOf{\Manifold{M}}}\) and \(\beta \in \SectionsOf{\Forms^\ell \CotangentSpaceOf{\Manifold{M}}}\) denote their wedge product by \(\omega \wedge \beta \in \SectionsOf{\Forms^{k+\ell}\CotangentSpaceOf{\Manifold{M}}}.\)
The wedge product distributes over the addition of smooth forms, amd endows the space of smooth forms with a graded algebra structure over the ring of smooth functions.
If \(\omega^1,\) \(\ldots,\) \(\omega^\ell \in \SectionsOf{\Forms^1 \CotangentSpaceOf{\Manifold{M}}}\) are smooth one-forms, then \(\langle \omega^1,\) \(\ldots,\) \(\omega^\ell\rangle \subseteq \SectionsOf{\Forms \CotangentSpaceOf{\Manifold{M}}}\) denotes the ideal generated by \(\omega^1,\) \(\ldots,\) \(\omega^\ell\) over the aforementioned graded algebra.
The exterior derivative of \(\omega \in \SectionsOf{\Forms^{k}\CotangentSpaceOf{\Manifold{M}}}\) is \(\D \omega \in \SectionsOf{\Forms^{k+1}\CotangentSpaceOf{\Manifold{M}}}.\)
If \(\Ideal{I} \subseteq \SectionsOf{\Forms \CotangentSpaceOf{\Manifold{M}}}\) is an ideal, then the largest ideal contained in \(\Ideal{I}\) that is closed under the exterior derivative is denoted \(\Ideal{I}^{(\infty)}\);
the ideal \(\Ideal{I}^{(\infty)}\) is otherwise known as the differential closure of \(\Ideal{I}.\)
The Lie derivative of \(\omega\) along a vector field \(X \in \VectorFieldsOn{\Manifold{M}}\) is the smooth \(k\)-form \(\Lie_X \omega \in \SectionsOf{\Forms^k \CotangentSpaceOf{\Manifold{M}}}.\)
Repeated Lie derivatives of order \(j > 1\) are defined recursively by \(\Lie_X^j \omega \defineas \Lie_X^{j - 1} (\Lie_X \omega).\)

If \(H: \Manifold{M} \to \Real^\ell\) is smooth, then it can be written component-wise as \(H = (H^1,\) \(\ldots,\) \(H^\ell)\) where \(H^1,\) \(\ldots,\) \(H^\ell: \Manifold{M} \to \Real\) are smooth.
The function \(H^i: \Manifold{M} \to \Real\) may be seen as a smooth zero-form on \(\Manifold{M}\) and, as such, has an exterior derivative \(\D H^i \in \SectionsOf{\CotangentSpaceOf{\Manifold{M}}}\) and a Lie derivative, along the vector field \(X,\) \(\Lie_X H^i: \Manifold{M} \to \Real.\)
\endgroup

\section{Transverse Feedback Linearization}
\label{sec:tfl}
Consider the nonlinear, control affine system
\begin{equation}
  \label{eqn:system}
  \dot{x}(t) = \tilde{f}(x(t))
    + \textstyle\sum_{j = 1}^m \tilde{g}_j(x(t))\, u^j(t).
\end{equation}
where \(x(t) \in \Real^n,\) \(u(t) = (u^1(t),\ldots,u^m(t))\in \Real^m,\) and \(\tilde{f}, \tilde{g}_j \in \VectorFieldsOn{\Real^n}\);
component-wise, we write \(\tilde{f} = (\tilde{f}^1, \ldots, \tilde{f}^n)\) and \(\tilde{g}_j = (\tilde{g}^1_j, \ldots, \tilde{g}^n_j).\)

Fix \(\Manifold{N} \subset \Real^n\) to be a given closed, embedded submanifold of dimension \(0 \leq n^* < n\) rendered controlled invariant by a feedback \(u_*: \Manifold{N} \to \Real^m\) and fix a point \(x_0 \in \Manifold{N}.\)
The local transverse feedback linearization problem asks us to find a state and feedback transformation defined in a neighbourhood of \(x_0\) so that in the transformed coordinates the dynamics transverse to the set \(\Manifold{N}\) are linear and controllable.
More precisely, find, on an open set \(\OpenSet{U} \subseteq \Real^n\) containing \(x_0,\) a diffeomorphism \(\Phi: \OpenSet{U} \to \Phi(\OpenSet{U}),\) \((\eta, \xi) = \Phi(x),\) and feedback transformation \((v_\parallel, v_\pitchfork) \defineas \alpha(x) + \beta(x) u\) where, in the \((\eta, \xi)\)-coordinates, the nonlinear control system~\eqref{eqn:system} takes the form
\[
\begin{aligned}
  \dot{\eta}(t)
    &=
      \overline{f}(\eta(t), \xi(t))
      +
      \textstyle\sum_{j = 1}^{m - \rho_0}
        \overline{g}_{\parallel,j}(\eta(t), \xi(t))\,
        v_\parallel^j(t)
      +
      \textstyle\sum_{j = 1}^{\rho_0}
        \overline{g}_{\pitchfork,j}(\eta(t), \xi(t))\,
        v_\pitchfork^j(t),\\
  \dot{\xi}(t)
    &=
      A\, \xi(t) + \textstyle\sum_{j = 1}^{\rho_0} {b_j}\, v_\pitchfork^j(t),
\end{aligned}
\]
and \((A,\) \(\begin{bsmallmatrix} b_1 & \cdots & b_{\rho_0}\end{bsmallmatrix})\) is in Brunovsk\'y normal form.
Moreover, in \((\eta, \xi)\)-coordinates, the manifold \(\Manifold{N}\) is locally given by
\[
  \Phi(\OpenSet{U} \cap \Manifold{N})
    =
      \left\{
        (\eta, \xi) \in \Phi(\OpenSet{U})
        \colon
        \xi = 0
      \right\}.
\]
One solution to this problem is to find an output of suitable vector relative degree that vanishes on \(\Manifold{N}.\)
This was the view championed by Isidori with regards to the problem of feedback linearization.
In the spirit of this idea, the following theorem was established in~\cite{Nielsen2008}.
\begin{theorem}[{\cite[Theorem 3.1]{Nielsen2008}}]
  \label{thm:LTFL-Nielsen-3.1}
  The local transverse feedback linearization problem is solvable at \(x_0\) if, and only if, there exists an open subset \(\OpenSet{U} \subseteq \Real^n\) of \(x_0\) and smooth function \(h: \OpenSet{U} \to \Real^{\rho_0}\) so that:
  \begin{enumerate}[label={(\arabic*)}]
    \item{
      \label{thm:LTFL-Nielsen-3.1:1}
      \(
        \OpenSet{U} \cap \Manifold{N}
          \subset
            h^{-1}(0),
      \)
      and
    }
    \item{
      \label{thm:LTFL-Nielsen-3.1:2}
      the system~\eqref{eqn:system} with output \(y = h(x)\) yields a well-defined vector relative degree of \(\kappa = (\kappa_1, \ldots, \kappa_{\rho_0})\) at \(x_0\) with \(\sum_{i=1}^{\rho_0} \kappa_i = n - n^*.\)
    }
  \end{enumerate}
\end{theorem}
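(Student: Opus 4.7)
My plan is to prove the biconditional by treating each direction separately, in both cases leveraging the classical theory of input-output feedback linearization for systems admitting a well-defined vector relative degree.

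For the necessity direction, I would assume the TFL problem is solvable, so a diffeomorphism \((\eta, \xi) = \Phi(x)\) and feedback \(v = \alpha(x) + \beta(x) u\) exist placing~\eqref{eqn:system} into the TFL normal form with the \(\xi\)-dynamics in Brunovsk\'y form having controllability indices \((\kappa_1, \ldots, \kappa_{\rho_0})\). I would then define \(h^i(x) \defineas \xi^i_1 \circ \Phi(x)\), namely the top component of the \(i\)-th Brunovsk\'y chain, for \(i = 1, \ldots, \rho_0\). Condition~\ref{thm:LTFL-Nielsen-3.1:1} is immediate since \(\Phi(\OpenSet{U} \cap \Manifold{N}) = \{\xi = 0\}\). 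A direct calculation using the chain structure of the \(\xi\)-dynamics and invertibility of the decoupling matrix assembled from \(\beta\) verifies that \(y = h(x)\) has well-defined vector relative degree \((\kappa_1, \ldots, \kappa_{\rho_0})\) at \(x_0\), with \(\sum_i \kappa_i = \dim \xi = n - n^*\), giving~\ref{thm:LTFL-Nielsen-3.1:2}.

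For the sufficiency direction, I would assume an \(h\) satisfying both conditions exists. Isidori's input-output linearization theorem then produces transverse coordinates \(\xi^i_k = \Lie^{k-1}_{\tilde{f}} h^i\) for \(1 \leq k \leq \kappa_i\), yielding \(n - n^*\) functionally independent functions near \(x_0\); extending these by \(n^*\) further functions \(\eta^1, \ldots, \eta^{n^*}\) to a chart \(\Phi\) and applying the standard regular feedback constructed from the decoupling matrix places the \(\xi\)-dynamics in Brunovsk\'y form. The local zero dynamics submanifold is then \(Z^\star \defineas \Phi^{-1}(\{\xi = 0\})\), of dimension \(n^*\). Since \(\Manifold{N}\) is controlled invariant and, by~\ref{thm:LTFL-Nielsen-3.1:1}, contained in \(h^{-1}(0)\), maximality of \(Z^\star\) among controlled invariant submanifolds of \(h^{-1}(0)\) yields \(\OpenSet{U} \cap \Manifold{N} \subseteq Z^\star\) after possibly shrinking \(\OpenSet{U}\); equality of dimensions \(\dim \Manifold{N} = n^* = \dim Z^\star\) then forces local coincidence, so \(\Phi(\OpenSet{U} \cap \Manifold{N}) = \{\xi = 0\}\) and the full TFL normal form holds.

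The main obstacle is the dimension-based identification \(\Manifold{N} = Z^\star\) in the sufficiency argument. The subtlety is that Isidori's theorem characterizes \(Z^\star\) only as the maximal controlled invariant submanifold of the output-zeroing set, so one must carefully use the controlled invariance of \(\Manifold{N}\), its closed, embedded hypothesis, and the matching dimension to conclude local coincidence, shrinking the neighbourhood if necessary. Once this identification is in hand, the Brunovsk\'y structure of the \(\xi\)-dynamics and the form of the residual \(\eta\)-dynamics come directly from the input-output normal form.
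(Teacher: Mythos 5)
The paper does not prove this statement; it is imported verbatim as a citation of \cite[Theorem 3.1]{Nielsen2008}, so there is no in-paper proof to compare against. Your argument is the standard one and is essentially the proof given in that reference: necessity by reading off the heads of the Brunovsk\'y chains (note only that you must take, in each chain, the component \emph{farthest} from the input so that it has relative degree \(\kappa_i\), and that relative degree is preserved under the regular feedback \(v=\alpha+\beta u\)), and sufficiency via the input--output normal form plus the identification of \(\Manifold{N}\) with the local zero dynamics manifold \(Z^\star\). The step you flag as the main obstacle closes cleanly: since \(\Manifold{N}\) is rendered invariant by \(u_*\) and \(h\) vanishes on \(\Manifold{N}\), the relative-degree structure (\(\Lie_{\tilde g_j}\Lie_{\tilde f}^k h^i=0\) for \(k<\kappa_i-1\)) forces \(\Lie_{\tilde f}^k h^i=0\) on \(\Manifold{N}\) for all \(k\le\kappa_i-1\), so \(\OpenSet{U}\cap\Manifold{N}\subseteq Z^\star\) directly, and the dimension count \(n^*=n-\sum\kappa_i\) gives local coincidence of the two embedded submanifolds. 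So the proposal is correct and matches the approach of the cited source (and of the paper's own necessity argument for its main theorem, which uses the same normal-form reasoning).
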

The theorem shows that the transverse feedback linearization problem is equivalent to the zero dynamics assignment problem with relative degree:
find an output \(h\) for system~\eqref{eqn:system} that yields a well-defined relative degree and whose zero dynamics manifold locally coincides with \(\Manifold{N}.\)
\begingroup\blue
The output \(h\) is called a (local) \emph{transverse output} with respect to \(\Manifold{N}\) at \(x_0\), or, a transverse output for short.
Theorem~\ref{thm:LTFL-Nielsen-3.1} is not particularly useful in finding the output \(h,\) but we will show it still plays a crucial role in the transverse feedback linearization algorithm presented in this article.
\endgroup

\section{Technical Preliminaries}
\label{sec:mathbg}
Define the ambient manifold as the Cartesian product
\[
  \Manifold{M} \defineas \Real \times \Real^m \times \Real^n,
\]
of time (\(\Real\)), control (\(\Real^m\)) and states (\(\Real^n\)).
Let \(\pi: \Manifold{M} \to \Real^n\) be the projection map \(\pi(t,u,x) = x\) and let \(\iota: \Real^n \to \Manifold{M}\) be the insertion map \(\iota(x) = (0,0,x).\)
These maps are used to formally define functions that are independent of the control and time variables.
A smooth function \(h: \Manifold{M} \to \Real^m\) is said to be \emph{a smooth function of the state} if the diagram
\begin{center}
  \begin{tikzcd}
    \Manifold{M}
    \arrow{rr}{\pi}
    \arrow{rrd}{h}
      &
      &
        \Real^n
        \arrow{d}{h \circ \iota}
    \\
      &
      &
        \Real^m
  \end{tikzcd}
\end{center}
commutes.
It is convenient to consider vector fields that are \(\iota\)-related to the vector fields \(\tilde{f}\) and \(\tilde{g}_j\) in~\eqref{eqn:system}.
Define
\[\blue
  {f} 
    \defineas \textstyle\sum_{i=1}^n (\tilde{f}^i\circ \pi) \Derivation{x^i}
    \in \VectorFieldsOn{\Manifold{M}},
  \quad
  {g}_j \defineas \textstyle\sum_{i=1}^n (\tilde{g}^i_j \circ \pi) \Derivation{x^i} \in \VectorFieldsOn{\Manifold{M}},
  \quad 1 \leq j \leq m,
\]
so that \(\tilde{f}(\pi(p)) = \left.\J \pi\right|_p {f}(p)\) and that \(\tilde{f}\) and \({f}\) are \(\iota\)-related vector fields.

With these constructions, the control system~\eqref{eqn:system} is differentially equivalent to the system of differential equations on \(\Manifold{M},\)
\begin{equation}
  \label{eqn:systemOnM}
  \dot{t} = 1,\quad
  \dot{x} = {f}(x) + \textstyle\sum_{j = 1}^m {g}_j(x) u^j.
\end{equation}
Furthermore, system~\eqref{eqn:system} with output \(h: \Real^n \to \Real^m\) yields a vector relative degree at \(x_0 \in \Real^n\) if, and only if, system~\eqref{eqn:systemOnM} with output \(h \circ \pi: \Manifold{M} \to \Real^m\) yields a vector relative degree at \(\iota(x_0) \in \Manifold{M}.\)
We also consider the lift~\cite{DSouza2021} of the manifold \(\Manifold{N}\) into \(\Manifold{M}\).
Define the closed, embedded submanifold
\begin{equation}
  \label{eqn:lifted-target-set}
  \Manifold{L}
    \defineas
      \{
        p = (t, u, x) \in \Manifold{M}
        \colon
        t = 0,
        x \in \Manifold{N}
      \}.
\end{equation}
Fix \(p_0\) \(\defineas\) \((0,\) \(u_*(x_0),\) \(x_0)\) \(\in\) \(\Manifold{L} \subseteq \Manifold{M}.\)
At times we will lift other submanifolds of the state-space \(\Real^n\) in the same manner as in~\eqref{eqn:lifted-target-set}.

\begingroup
\blue
We can view solutions to the control system~\eqref{eqn:systemOnM} as integral submanifolds of a distribution.
Define the smooth and regular distribution of control directions,
\begin{equation}
  \label{eqn:control-dist}
  \Dist{U}_p
    \defineas
      \SpanInline{\Real}{
        \left.\derivation{u^1}\right|_p,
        \ldots,
        \left.\derivation{u^m}\right|_p
      }
      \subseteq \TangentSpaceAt{\Manifold{M}}{p}
      ,
    \qquad
      p \in \Manifold{M},
\end{equation}
which is associated with the \(\CInfty(\Manifold{M})\)-submodule \(\Module{U} \defineas \SectionsOf{\Dist{U}}.\)
Additionally, define the smooth and regular distribution
\begin{equation}
  \label{eqn:dist-0}
  \Dist{D}^{(0)}_p
    \defineas
      \SpanInline{\Real}{
        \left.\derivation{t}\right|_p
          +
          \left.f\right|_p + \textstyle\sum_{j = 1}^m \left.{g}_j\right|_p u^j
      }
      +
      \Dist{U}_p,
\end{equation}
which is associated to the \(\CInfty(\Manifold{M})\)-submodule \(\Module{D}^{(0)} \defineas \SectionsOf{\Dist{D}^{(0)}}.\)
Observe that the vector field
\begin{equation}
  \label{eqn:vfieldsystem}
  F \defineas
    \derivation{t}
    +
    f + \textstyle\sum_{j = 1}^m {g}_j u^j
  \in \Module{D}^{(0)}
\end{equation}
is tangent to solutions of~\eqref{eqn:systemOnM}.
When \(u\) is a sufficiently regular signal, integral submanifolds of \(\Dist{D}^{(0)}\) determine solutions of~\eqref{eqn:systemOnM} and, in turn, solutions of~\eqref{eqn:system}.

Alternatively, the control system~\eqref{eqn:systemOnM} may be viewed as an exterior differential system on \(\Manifold{M}\) in the following way.
Define the smooth one-forms
\begin{equation}
  \label{eqn:omega}
  \omega^i
    \defineas
      \D x^i
      -
      (f^i(x) + \textstyle\sum_{j = 1}^m g^i_j(x)\, u^j)\,\D t
    \in \SectionsOf{\CotangentSpaceOf{\Manifold{M}}},
  \qquad
  1\leq i \leq n.
\end{equation}
The submanifolds of \(\Manifold{M}\) on which the ideal,
\begin{equation}
  \label{eqn:systemideal}
  \Ideal{I}^{(0)} \defineas \langle \omega^1, \ldots, \omega^n \rangle
  \subseteq
  \SectionsOf{\Forms\CotangentSpaceOf{\Manifold{M}}},
\end{equation}
vanishes correspond to solutions of the differential equation~\eqref{eqn:systemOnM} where \(u\) is a sufficiently regular signal --- corresponding therein to solutions of~\eqref{eqn:system}.
The ideal \(\Ideal{I}^{(0)}\) is simply, finitely, non-degenerately generated by construction since it is generated by a finite number of smooth one-forms \(\omega^i\) that are pointwise linearly independent.
It follows that the generators of \(\Ideal{I}^{(0)}\) span a smooth and regular codistribution \(\Codist{I}^{(0)} \subseteq \CotangentSpaceOf{\Manifold{M}}.\)
To this ideal \(\Ideal{I},\) we associate the object of importance in this article: the derived flag.
\begin{definition}
  \label{def:derivedflag}
  Let \(\Ideal{I}^{(0)} \subseteq \SectionsOf{\Forms\CotangentSpaceOf{\Manifold{M}}}\) be an ideal, and define the \textbf{derived ideals} by
  \begin{equation}
    \label{eqn:derive}
    \Ideal{I}^{(k + 1)}
      \defineas \{ \omega \in \Ideal{I}^{(k)} \colon \D \omega \in \Ideal{I}^{(k)} \}, \qquad k \geq 0.
  \end{equation}
  The \textbf{derived flag} of \(\Ideal{I}^{(0)}\) is the sequence of derived ideals,
  \begin{equation}
    \label{eqn:flag}
    \{ 0 \}
      \subseteq \cdots
      \subseteq \Ideal{I}^{(i+1)}
      \subseteq \Ideal{I}^{(i)}
      \subseteq \cdots
      \subseteq \Ideal{I}^{(1)}
      \subseteq \Ideal{I}^{(0)}.
  \end{equation}
  The \textbf{length} of the derived flag is the smallest \(N \in \Naturals\) such that \(\Ideal{I}^{(N)} = \Ideal{I}^{(N+1)}.\)
\end{definition}

The decreasing sequence~\eqref{eqn:flag} terminates at the differential ideal \(\Ideal{I}^{(N)} \subseteq \Ideal{I}^{(0)},\) i.e., there exists a smallest \(N \in \Naturals\) so that \(\Ideal{I}^{(N)} = \Ideal{I}^{(N+1)}.\)
The differential ideal \(\Ideal{I}^{(N)}\) is not, in general, the largest differential ideal contained within \(\Ideal{I}^{(0)}.\)
For this reason, we define the largest differential ideal contained within an ideal.
\begin{definition}
  \label{def:diffclosure}
  Let \(\Ideal{I} \subseteq \SectionsOf{\Forms\CotangentSpaceOf{\Manifold{M}}}\) be an ideal.
  The largest, differential ideal contained in \(\Ideal{I}\) is denoted \(\Ideal{I}^{(\infty)}.\)
  The ideal \(\Ideal{I}^{(\infty)}\) is said to be the \textbf{differential closure} of the ideal \(\Ideal{I}.\)
\end{definition}
The existence of the differential closure is ensured by an argument leveraging Zorn's Lemma (see~\cite[Lemma 3.9.4]{Lewis2005}).
With an abuse of notation, we denote the differential closure of the ideal \(\Ideal{I}^{(0)}\) by \(\Ideal{I}^{(\infty)}.\)
If all the ideals in the derived flag~\eqref{eqn:flag} are simply, finitely, non-degenerately generated --- as was the case for \(\Ideal{I}^{(0)}\) --- then \(\Ideal{I}^{(N)} = \Ideal{I}^{(\infty)}.\)
Consequently, we make the following convenient assumption about this flag.
\begin{assumption}
  \label{assume:regular}
  The ideals \(\Ideal{I}^{(k)}\) and the augmented ideals \(\langle \Ideal{I}^{(k)}, \D t\rangle^{(\infty)}\) are locally, simply, finitely, non-degenerately generated, for all \(k \geq 0.\)
\end{assumption}
Assumption~\ref{assume:regular} allows us to take the generators of \(\Ideal{I}^{(k)}\) and use them as a basis for a smooth and regular codistribution \(\Codist{I}^{(k)} \subseteq \CotangentSpaceOf{\Manifold{M}}.\)
In particular, \(\Codist{I}^{(k)}\) is a smooth and regular codistribution for all \(k \geq 0.\)

Of great import to this article is the notion of an output \(h: \Manifold{M} \to \Real^m \) for system~\eqref{eqn:systemOnM} yielding vector relative degree \((\kappa_1,\) \(\ldots,\) \(\kappa_m)\) at a point \(x_0.\)
Conditions for this are well-established, and are explicitly discussed on~\cite[pg. 220]{Isidori1995}.
Here we state a dual variant of these conditions that apply in the uniform vector relative degree case --- when \(\kappa_1 = \cdots = \kappa_m.\)
The proof of this result can be found in~\cite[Lemma 2.2.14]{DSouza2022}.
For brevity, it is not included in this article.
\begin{proposition}
  \label{prop:relativedegree}
  Suppose \(h^1,\) \(\ldots,\) \(h^\ell \in \CInfty(\Manifold{M}),\) \(\ell \leq m,\) are smooth functions of the state with linearly independent differentials at \(p_0.\)
  The system~\eqref{eqn:systemOnM} with output \((h^1,\) \(\ldots,\) \(h^\ell)\) yields a vector relative degree \((\kappa_1,\) \(\ldots,\) \(\kappa_1)\) at \(p_0\) if, and only if, there exists an open set \(\OpenSet{U} \subseteq \Manifold{M}\) containing \(p_0\) where,
  \begin{equation}
    \label{prop:relativedegree:a}
    \GenerateInline{\D h^1, \ldots, \D h^\ell}
    \subseteq
    \GenerateInline{\Ideal{I}^{(\kappa_1 - 1)}, \D t}^{(\infty)}
  \end{equation}
  and, at \(p_0,\)
  \begin{equation}
    \label{prop:relativedegree:b}
    \SpanInline{\Real}{
      \D h^1_{p_0}, \ldots, \D h^\ell_{p_0}
    }
    \cap
    \SpanInline{\Real}{
      \Codist{I}^{(\kappa_1)}_{p_0},
      \D t_{p_0}
    }
    =
    \{
      0
    \}.
  \end{equation}
\end{proposition}
Proposition~\ref{prop:relativedegree} only addresses those outputs with uniform relative degree because the non-uniform case involves ensuring the scalar outputs and their Lie derivatives form an adapted basis that generates the ideals in the derived flag~\eqref{eqn:flag}.
This result is useful for finding and verifying outputs that yield uniform vector relative degree.
\endgroup

\section{Main Result}
\label{sec:main}
The goal of this article is to demonstrate a constructive algorithm for producing a transverse output with respect to \(\Manifold{N}\) at \(x_0 \in \Manifold{N}.\)
The computable necessary and sufficient conditions under which this algorithm succeeds are a slight variation of those proposed in~\cite{DSouza2021}.
{\blue Recalling the lift~\eqref{eqn:lifted-target-set} and \(p_0 = (0, u_*(x_0), x_0) \in \Manifold{L},\)
the first of these conditions is the controllability condition}
\begin{equation}
  \label{controllability}
  \tag{Con}
    \Ann\left(
      \TangentSpaceAt{\Manifold{L}}{p_0}
    \right)
    \cap
    \SpanInline{\Real}{
      \Codist{I}^{(n - n^*)}_{p_0},
      \D t_{p_0}
    }
    =
      \SpanInline{\Real}{ \D t_{p_0} }.
\end{equation}
The second is an involutivity condition demanding that on an open set \(\OpenSet{U}\subseteq \Manifold{M}\) containing \(p_0,\) for all \(p \in \OpenSet{U} \cap \Manifold{L},\)
\begin{equation}
  \label{involutivity}
  \tag{Inv}
  \Ann\left(\TangentSpaceAt{\Manifold{L}}{p}\right)
  \cap
  \SpanInline{\Real}{ \Codist{I}^{(k)}_p , \D t_{p} }
    \subseteq
      \GenerateInline{\Ideal{I}^{(k)}, \D t}^{(\infty)}_p,
  \qquad
    0 \leq k \leq n - n^*.
\end{equation}
Lastly, we require that the codistribution in the left-hand side of~\eqref{involutivity} satisfies, for all \(p \in \OpenSet{U} \cap \Manifold{L}\) and all \(0 \leq k \leq n - n^*,\)
\begin{equation}
  \label{constantdim}\blue
  \tag{Dim}
  \Dim(
    \Ann(
      \TangentSpaceAt{\Manifold{L}}{p_0}
    )
    \cap
    \SpanWord_\Real\{
      \Codist{I}^{(k)}_{p_0}, \D t_{p_0}
    \}
  )
    =
      \Dim(
        \Ann(
          \TangentSpaceAt{\Manifold{L}}{p}
        )
        \cap
        \SpanWord_\Real\{
          \Codist{I}^{(k)}_{p}, \D t_{p}
        \}
      ).
\end{equation}
This {\blue is} known as the constant dimensionality condition.
{\blue
The algorithm is used to prove that the conditions~\eqref{controllability},~\eqref{involutivity} and~\eqref{constantdim} imply the transverse feedback linearization problem is solvable at \(x_0 \in \Manifold{N}\) by explicitly constructing a transverse output.}
\begin{remark}
  Elements of other well known algorithms in~\cite{Gardner1992, Mullhaupt2006} as well as that of the ``Blended Algorithm'' of~\cite{Mehra2014} appear in the proposed algorithm.
\end{remark}
We are ready to state the main result of this article.
\begin{theorem}[Main Result]
  \label{thm:tfl}
  The system~\eqref{eqn:system} is locally transverse feedback linearizable with respect to the closed, embedded and controlled-invariant submanifold \(\Manifold{N} \subseteq \Real^n\) at \(x_0\) if, and only if,~\eqref{controllability} holds and there exists an open set \(\OpenSet{U} \subseteq \Manifold{M}\) of \(p_0\) on which conditions~\eqref{involutivity} and~\eqref{constantdim} hold.
\end{theorem}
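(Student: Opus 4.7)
The plan is to organize the proof around Theorem~\ref{thm:LTFL-TowerControlled}, which recasts solvability of the TFL problem at $x_0$ as the statement that $\Manifold{N}$ is a regular zero dynamics manifold of some type $(\rho_0, \kappa)$ at $x_0$. The two directions are then argued separately, with necessity proceeding by explicit computation from a postulated transverse output and sufficiency obtained by running the algorithm that constitutes the main contribution of the article.

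For necessity, I would assume the existence of a transverse output $h = (h^1, \ldots, h^{\rho_0})$ with vector relative degree $\kappa$ satisfying $\sum_i \kappa_i = n - n^*$ and vanishing on $\OpenSet{U} \cap \Manifold{N}$. Lifting to $\Manifold{M}$ and setting $\varphi^i_j \defineas \Lie_Y^{j-1}(h^i \circ \pi)$, a direct induction (using Assumption~\ref{assume:regular}) shows that the codistribution $\Span{\Real}{\Codist{I}^{(k)}_p, \D t_p}$ is spanned, at points $p \in \Manifold{L}$ near $p_0$, by $\D t_p$ together with the differentials $\{ \D \varphi^i_j|_p : 1 \le j \le \kappa_i - k \}$. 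Since each $\varphi^i_j$ vanishes on $\Manifold{L}$ (by controlled invariance of $\Manifold{N}$ and vanishing of $h$ there), each $\D \varphi^i_j$ lies in $\Ann(\TangentSpaceAt{\Manifold{L}}{p})$; moreover each such form is exact, so its $\Real$-span already sits inside the largest differential ideal $\Generate{\Ideal{I}^{(k)}, \D t}^{(\infty)}$. Rank counting then delivers \eqref{constantdim}, exactness gives \eqref{involutivity}, and the case $k = n - n^*$, where only $\D t$ survives, gives \eqref{controllability}.

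For sufficiency, I would follow the algorithmic path previewed in Section~\ref{sec:main}, peeling the derived flag from the top. At the top level $k = n - n^*$, condition \eqref{controllability} certifies that $\Ann(\TangentSpaceAt{\Manifold{L}}{p_0})$ intersects $\Span{\Real}{\Codist{I}^{(k)}_{p_0}, \D t_{p_0}}$ only in $\Real \cdot \D t_{p_0}$, fixing the natural terminus. Descending from $k$ to $k - 1$, condition \eqref{constantdim} fixes a well-defined incremental rank, and condition \eqref{involutivity} ensures that this increment lies inside a Frobenius-integrable differential ideal, so the new one-forms can be expressed as differentials of smooth state functions pulled back through $\pi$. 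These new functions are appended to the partial output assembled at previous steps; their common zero-set is the manifold $\Manifold{Z}^{(k)}$, and the induction terminates at $k = 1$ with $\Manifold{Z}^{(1)} = \OpenSet{U} \cap \Manifold{N}$, whence Theorem~\ref{thm:LTFL-Nielsen-3.1} applies to yield a transverse output.

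The hard part will be sufficiency, and specifically the three interlocking invariants that must be maintained along the induction: (i) that the one-forms extracted at step $k$ really descend to differentials of functions on $\Real^n$ rather than depending nontrivially on $t$ or $u$, (ii) that the accumulated output has a \emph{uniform} vector relative degree with exponents summing to $n - n^*$, rather than merely having a relative degree component-wise, and (iii) that the common zero-level set at the final step equals $\Manifold{N}$ locally rather than merely containing it. Establishing the induction hypotheses that track all three simultaneously is where Section~\ref{sec:tech} must do its work, with \eqref{involutivity} supplying Frobenius integrability, \eqref{constantdim} supplying regularity of the constructions from base point to base point, and \eqref{controllability} supplying the termination certificate that forces equality with $\Manifold{N}$ at the bottom.
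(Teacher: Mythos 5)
Your overall structure mirrors the paper's: both use Theorem~\ref{thm:LTFL-TowerControlled} to recast the problem, prove necessity by computing directly from the normal form, and prove sufficiency by running the descending adaptation algorithm with an induction that builds a partial transverse output. That said, two points in the necessity direction need correction before the argument stands.

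The central claim in your necessity argument — that $\Span{\Real}{\Codist{I}^{(k)}_p, \D t_p}$ is spanned at points of $\Manifold{L}$ by $\D t_p$ together with $\{\D \varphi^i_j|_p : j \le \kappa_i - k\}$ — is false in general. In TFL normal form the $\eta$-dynamics are arbitrary and may contribute one-forms to the derived flag that are independent of the $\D\xi^{i,j}$ and that do \emph{not} annihilate $\TangentSpaceAt{\Manifold{L}}{p}$ (since $\Manifold{L}$ is parameterized by $\eta$); these forms survive in $\Codist{I}^{(k)}_p$ yet lie outside your claimed span. What is true, and what the paper actually establishes, is that the \emph{intersection} $\Ann(\TangentSpaceAt{\Manifold{L}}{p}) \cap \Span{\Real}{\Codist{I}^{(k)}_p, \D t_p}$ equals $\Span{\Real}{\D t_p} + \Span{\Real}{\D\xi^{i,j}_p : j > k}$. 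Once restated in this form your downstream reasoning (exactness of the generators gives~\eqref{involutivity}, rank counting gives~\eqref{constantdim}, the terminal case $k = n - n^*$ gives~\eqref{controllability}) goes through, so this is a repairable misstatement rather than a structural flaw, but as written the ``direct induction'' would not succeed because the thing being proved is false.

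A smaller issue: in your invariant (ii) you describe the accumulated output as having a \emph{uniform} vector relative degree. It does not; the accumulated output has vector relative degree $(\kappa_1, \ldots, \kappa_{\rho_0})$, and the $\kappa_i$ are generally distinct. Uniformity holds only for the batch of scalar outputs extracted at a single step $k$ (which all have relative degree $\kappa_1 - k + 1$), and this is precisely what Proposition~\ref{prop:relativedegree} is calibrated for. The genuine invariant you want is that the full accumulated output yields a well-defined (not merely component-wise) vector relative degree — i.e., that the decoupling matrix stays full-rank as new components are appended. This is what the paper's inductive hypothesis (H.2) and the contradiction argument with the annihilator bound (H.4) maintain; your sketch correctly identifies the need for such a maintained invariant but mislabels it.
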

There are a number of constants that appear in the results that follow.
First we define the indices\footnote{\blue The quotient is taken viewing the objects as \emph{real vector subspaces} of \(\CotangentSpaceAt{\Manifold{M}}{p_0}.\) It amounts to a difference in dimension.}, for all \(i \geq 0,\)
\begin{equation}
  \label{eqn:controlindex}
    \rho_i(p_0)
      \defineas
        \Dim
          \frac{
            \Ann(\TangentSpaceAt{\Manifold{L}}{p_0})
            \cap
            \SpanInline{\Real}{\Codist{I}^{(i)}_{p_0}, \D t_{p_0}}
          }{
            \Ann(\TangentSpaceAt{\Manifold{L}}{p_0})
            \cap
            \SpanInline{\Real}{\Codist{I}^{(i+1)}_{p_0}, \D t_{p_0}}
          }.
\end{equation}
Using the indices \(\rho_i\) define
\begin{equation}
  \label{eqn:controlindex:k}
  \kappa_i(p_0)
    \defineas
      \Card\{
        j
        \colon
        \rho_j(p_0) \geq i
      \},
  \qquad
  i \geq 0.
\end{equation}
We call \((\kappa_1, \ldots, \kappa_{n-n^*})\) the \emph{transverse controllability indices} of~\eqref{eqn:system} with respect to \(\Manifold{N}\) at \(x_0 = \pi(p_0)\)~\cite{Nielsen2008}.
Observe that, when the constant dimension condition~\eqref{constantdim} holds, \(\rho_i\) and \(\kappa_i\) are constant on an open set of \(\Manifold{N}\) containing \(p_0.\)
\begingroup
\blue
These indices play a role in the algorithm as they indicate in which ideals components of the transverse output appear.
It is fairly straightforward to show that the conditions are necessary for the transverse feedback linearization problem to be solvable.
As a result, we now briefly prove the necessity of the dual conditions.
\endgroup
\begin{proof}[Proof of Theorem~\ref{thm:tfl} (Necessity)]
  For this direction of the proof, we only provide a sketch;
  the proof is straightforward.
  Suppose that, on an open neighbourhood \(\OpenSet{V} \subseteq \Real^n\) of \(x_0,\) there exists a diffeomorphism \((\eta, \xi) \defineas \Phi(x)\) and feedback transformation \((v_\parallel, v_\pitchfork) \defineas \alpha(x) + \beta(x) u\) where, in the new coordinates, the nonlinear control system~\eqref{eqn:system} takes the form
  \begin{equation}\blue
    \label{main:nec:1}
  \begin{aligned}
    \dot{\eta}(t)
      &=
        \overline{f}(\eta(t), \xi(t))
        +
        \textstyle\sum_{j = 1}^{m - \rho_0}
          \overline{g}_{\parallel,j}(\eta(t), \xi(t))\,
          v_\parallel^j(t)
        +
        \textstyle\sum_{j = 1}^{\rho_0}
          \overline{g}_{\pitchfork,j}(\eta(t), \xi(t))\,
          v_\pitchfork^j(t),\\
    \dot{\xi}(t)
      &=
        A\, \xi(t) + \textstyle\sum_{j = 1}^{\rho_0} {b_j}\, v_\pitchfork^j(t),
  \end{aligned}
  \end{equation}
  and \((A,\) \(\begin{bsmallmatrix} b_1 & \cdots & b_{\rho_0}\end{bsmallmatrix})\) is in Brunovsk\'y normal form.
  In \((\eta, \xi)\)-coordinates the target set \(\Manifold{N}\) is locally
  \[
    \Phi(\OpenSet{V} \cap \Manifold{N}) = \left\{
      (\eta, \xi) \in \OpenSet{V}
      \colon
      \xi = 0
    \right\}.
  \]
  The lifted dynamical system~\eqref{eqn:systemOnM} on \(\Manifold{M}\) equivalent to~\eqref{main:nec:1} takes the form, on some open set \(\OpenSet{U} \subseteq \Manifold{M}\) containing \(p_0,\)
  \[
  \begin{aligned}\blue
    \dot{t} &= 1,\\
    \dot{\eta}
      &=
        \overline{f}(\eta, \xi)
        +
        \textstyle\sum_{j = 1}^{m - \rho_0}
          \overline{g}_{\parallel,j}(\eta, \xi)\,
          v_\parallel^j
        +
        \textstyle\sum_{j = 1}^{\rho_0}
          \overline{g}_{\pitchfork,j}(\eta, \xi)\,
          v_\pitchfork^j,\\
    \dot{\xi}
      &=
        A\, \xi + \textstyle\sum_{j = 1}^{\rho_0} {b_j}\, v_\pitchfork^j,
  \end{aligned}
  \]
  and the lifted manifold \(\Manifold{L} \subseteq \OpenSet{U}\) is locally
  \[
    \OpenSet{U}\cap \Manifold{L} = \left\{
      (t, v, \eta, \xi) \in \OpenSet{U}
      \colon
      t = \xi = 0
    \right\}.
  \]
  By~\cite[Lemma 4.3]{Nielsen2008}, the controllability indices of \((A,\) \(\begin{bsmallmatrix} b_1 & \cdots & b_{\rho_0}\end{bsmallmatrix})\) equal the transverse controllability indices of~\eqref{eqn:controlindex} so \((A,\) \(\begin{bsmallmatrix} b_1 & \cdots & b_{\rho_0}\end{bsmallmatrix})\) has \(\rho_0\) integration chains of length \(\kappa_1\) \(\geq\) \(\cdots\) \(\geq\) \(\kappa_{\rho_0}.\)
  We can therefore index the \(\xi\)-coordinates in the following way:
  fix \(1 \leq i \leq \rho_0.\)
  Write
  \[
  \begin{aligned}
    \dot{\xi}^{i, j}
      &=
        \xi^{i, j - 1}, \quad 2 \leq j \leq \kappa_i,\\
    \dot{\xi}^{i, 1}
      &=
        v^i_\pitchfork,\\
  \end{aligned}
  \]
  It is then clear that, for each \(p \in \OpenSet{U},\) we have the adapted basis structure,
  \[
    \Ann(\TangentSpaceAt{\Manifold{L}}{p})
    \cap
    \SpanInline{\Real}{\Codist{I}^{(k)}_p, \D t_p}
      =
        \SpanInline{\Real}{
          \D t_p
        }
        +
        \SpanInline{\Real}{
          \D \xi^{i, j}_p
          \colon
          1 \leq i \leq \rho_0,
          j > k
        }.
  \]
  From this we can deduce the TFL conditions.
  The constant dimension condition~\eqref{constantdim} follows directly.
  The controllability condition~\eqref{controllability} follows from considering the index \(k = n - n^*.\)
  Observe there are no \(\xi^{i, j}\) with \(j > n - n^*\) since that would imply the existence of more than \(n - n^*\) transverse directions to \(\Manifold{N}.\)
  Therefore
  \[
    \Ann(\TangentSpaceAt{\Manifold{L}}{p})
    \cap
    \SpanInline{\Real}{\Codist{I}^{(n - n^*)}_p, \D t_p}
      =
        \SpanInline{\Real}{
          \D t_p
        }.
  \]
  The involutivity condition~\eqref{involutivity} follows because we have exact generators \(\D t\) and \(\D \xi^{i, j}\) that generate the codistribution
  \(
    \Ann(\TangentSpaceAt{\Manifold{L}}{p})
    \cap
    \SpanInline{\Real}{\Codist{I}^{(k)}_p, \D t_p}
  \)
  for all \(0 \leq k \leq n - n^*.\)
\end{proof}
\begingroup\blue
The proof of sufficiency for Theorem~\ref{thm:tfl} is constructive and results in the algorithm presented in Section~\ref{sec:mainproof}.
\endgroup

\section{Example: Checking the Conditions}
\label{sec:example:conditions}
Consider the nonlinear, control-affine system
\begin{equation}
  \label{eqn:example:1}
  \begin{alignedat}{5}
    \dot{x}^1 &= -x^2 - x^2\, u^2 &
      &\qquad &
      \dot{x}^4 &= u^1
      &\qquad &
      \dot{x}^7 &= x^5 + x^1\, u^2\\
    \dot{x}^2 &= x^1 &
      &\qquad &
      \dot{x}^5 &= x^6 - x^1\, u^2
      &\qquad &
      &\qquad \\
    \dot{x}^3 &= x^3\, x^4 + x^3\, u^1 &
      &\qquad &
      \dot{x}^6 &= x^7 + x^6 - x^3\, x^5 + x^1\, u^2
      &\qquad &
      &\qquad \\
  \end{alignedat}
\end{equation}
and the closed, embedded \(2\)-dimensional submanifold
\begin{equation}
  \label{eqn:example:N}
  \Manifold{N}
    \defineas
      \left\{
        x \in \Real^7
        \colon
        (x^1)^2 + (x^2)^2 - x^3 = x^4 = x^5 = x^6 = x^7 = 0
      \right\},
\end{equation}
rendered controlled-invariant by \(u_*(x) = 0.\)
Fix a point \(x_0 = (2, 0, 4, 0, 0, 0, 0) \in \Manifold{N}.\)
In light of Theorem~\ref{thm:LTFL-Nielsen-3.1}, if~\eqref{eqn:example:1} is transverse feedback linearizable with respect to \(\Manifold{N}\) at \(x_0,\) then there exists either a single scalar function yielding a relative degree \(5\) at \(x_0\) that vanishes on \(\Manifold{N}\) or two scalar functions yielding a vector relative degree \((\kappa_1, 5 - \kappa_1)\) at \(x_0\) that simultaneously vanish on \(\Manifold{N}.\)
Natural candidates can be picked out of the functions that define \(\Manifold{N}\) since they satisfy~\ref{thm:LTFL-Nielsen-3.1:1} of Theorem~\ref{thm:LTFL-Nielsen-3.1}.
Unfortunately, all of the scalar functions used to define \(\Manifold{N}\) in~\eqref{eqn:example:N} either yield a relative degree of \(1\) at \(x_0\) or do not yield a relative degree at all.
As a result, we cannot directly use them to form an output that satisfies~\ref{thm:LTFL-Nielsen-3.1:2} of Theorem~\ref{thm:LTFL-Nielsen-3.1}.

However, as we now show using our dual TFL conditions, the transverse feedback linearization problem is solvable at \(x_0 \in \Manifold{N}.\)
The ideals \(\Ideal{I}^{(i)}\) in~\eqref{eqn:flag} for system~\eqref{eqn:example:1} {\blue are such that
\[
\begin{aligned}
  \GenerateInline{\Ideal{I}^{(0)}, \D t}
    &=
      \GenerateInline{
        \D x^1, \D x^2, \D x^3, \D x^4, \D x^5, \D x^6, \D x^7, \D t
      }\\
  \GenerateInline{\Ideal{I}^{(1)}, \D t}
    &=
      \GenerateInline{
        x^1\,\D x^1 + x^2\,\D x^7,
        \D x^2 - \D x^7,
        \D x^3 - x^3\,\D x^4,
        \D x^5 + \D x^7,
        \D x^6 - \D x^7,
        \D t
      }\\
  \GenerateInline{\Ideal{I}^{(2)}, \D t}
    &=
      \GenerateInline{
        \beta^1, \beta^2, \D t
      },\\ 
  \GenerateInline{\Ideal{I}^{(3)}, \D t}
    &=
      \GenerateInline{ \D t },
\end{aligned}
\]}%
where \(\beta^1,\) \(\beta^2 \in \DifferentialForms{\Manifold{M}}\) are smooth one-forms whose expressions we omit for clarity.
The derived flag~\eqref{eqn:flag} terminates at \(\Ideal{I}^{(3)}.\)
Immediately we see that the controllability condition~\eqref{controllability} holds since
\[
  \GenerateInline{ \Ideal{I}^{(n - n^*)}, \D t }
  =
  \GenerateInline{ \Ideal{I}^{(5)}, \D t }
  =
  \GenerateInline{ \Ideal{I}^{(3)}, \D t }
  =
  \GenerateInline{ \D t }.
\]
Next we check the constant dimensionality condition~\eqref{constantdim}.
Observe that, for any \(p \in \Manifold{L}\) in a sufficiently small open set containing \(p_0,\)
\begin{equation}
  \label{eqn:example:flag}\blue
\begin{aligned}
  \Ann(\TangentSpaceAt{\Manifold{L}}{p})
  \cap
  \SpanInline{\Real}{\Codist{I}^{(0)}_p, \D t_p}
    &=
      \Ann(\TangentSpaceAt{\Manifold{L}}{p}),\\
  \Ann(\TangentSpaceAt{\Manifold{L}}{p})
  \cap
  \SpanInline{\Real}{\Codist{I}^{(1)}_p, \D t_p}
    &=
      \SpanWord_\Real\{
        \D x^6 - \D x^7,
        \D x^5 + \D x^7,\\
        -2 x^1\,\D x^1 &- 2 x^2\, \D x^2 + \D x^3 - x^3\,\D x^4 - 2
        x^2\,\D x^7, \D t\},\\
  \Ann(\TangentSpaceAt{\Manifold{L}}{p})
  \cap
  \SpanInline{\Real}{\Codist{I}^{(2)}_p, \D t_p}
  &=
    \SpanInline{\Real}{
      \D x^5 + \D x^7,
      \D t
    },\\
  \Ann(\TangentSpaceAt{\Manifold{L}}{p})
  \cap
  \SpanInline{\Real}{\Codist{I}^{(3)}_p, \D t_p}
  &=
    \SpanInline{\Real}{
      \D t
    }.\\
\end{aligned}
\end{equation}
Thus the constant dimensionality condition~\eqref{constantdim} holds.
It remains to check the involutivity condition~\eqref{involutivity} holds.
Note that the ideals \(\langle\Ideal{I}^{(0)}, \D t\rangle,\) \(\langle \Ideal{I}^{(1)}, \D t\rangle\) and \(\langle \Ideal{I}^{(3)}, \D t\rangle\) are all differential ideals.
Therefore, it suffices to check that
\[
  \Ann(\TangentSpaceAt{\Manifold{L}}{p})
  \cap
  \SpanInline{\Real}{\Codist{I}^{(2)}_p, \D t_p}
  \subseteq
  \GenerateInline{
    \Ideal{I}^{(2)}, \D t
  }_p^{(\infty)}.
\]
Using \texttt{Maple}, we directly compute the derived flag for \(\langle \Ideal{I}^{(2)}, \D t \rangle\) and verify it converges to
\[
  \GenerateInline{
    \Ideal{I}^{(2)}, \D t
  }^{(\infty)}
    =
      \GenerateInline{
        \D x^5 + \D x^7, \D t
      }.
\]
Comparing this with the expression for
\(
  \Ann(\TangentSpaceAt{\Manifold{L}}{p})
  \cap
  \SpanWord\{\Codist{I}^{(2)}_p, \D t_p\}
\)
above,
we see that the involutivity condition~\eqref{involutivity} holds.
As a result, the transverse feedback linearization problem is solvable for~\eqref{eqn:example:1} with respect to \(\Manifold{N}\) at \(x_0.\)
In Section~\ref{sec:example:algorithm} we revisit this example and explicitly use the proposed algorithm to produce a non-trivial transverse output with respect to \(\Manifold{N}\) at \(x_0\) that solves the TFL problem.

\section{Supporting Results}
\label{sec:tech}
Before completing the proof of the main result, we present a number of supporting propositions.
% They are organized at a high-level by the problem they tackle.
% Section~\ref{sec:tech:transverseoutput} presents a series of technical results that connect the conditions of transverse feedback linearization ---~\eqref{controllability},~\eqref{involutivity},~\eqref{constantdim} --- to the algorithmic procedure.
They will connect the conditions of transverse feedback linearization ---~\eqref{controllability},~\eqref{involutivity},~\eqref{constantdim} --- to the algorithmic procedure.

% \subsection{In Preparation to Adapt to \texorpdfstring{\(\Manifold{L}\)}{L} and the System Structure}
\label{sec:tech:transverseoutput}
Recall Proposition~\ref{prop:relativedegree} established that an output \(h\) yielding a uniform vector relative degree at \(p_0\) must have a differential that lives in a specific ideal but not live in the subsequent ideal of the derived flag~\eqref{eqn:flag}.
Now we ask a different question:
Given an output that yields a uniform vector relative degree, how do we find new scalar outputs that (1) yield a smaller uniform vector relative degree and (2) combines with previously known scalar outputs to yield a vector relative degree?
All the while, we must ensure that (3) the new outputs vanish on the target manifold \(\Manifold{N}.\)

Points (1) and (2) are classically performed by ``adapting'' the basis of \emph{exact} generators for the derived flag~\eqref{eqn:flag}.
It is point (3) that imposes a greater degree of difficulty in the adaptation process precisely because we ask that the differentials of the outputs \(\D h^i\) annihilate tangent vectors to \(\Manifold{N}.\)
At a high level, we present three procedures that together will be used to correctly adapt the derived flag.
These are:
\begin{enumerate}[label={(\alph*)}]
  \item{
    \label{adapt:a}
    finding generators that ``drop off'' when computing the derived flag~\eqref{eqn:flag}, 
  }
  \item{
    \label{adapt:b}
    grouping generators into those that annihilate tangent vectors to \(\Manifold{L}\) and those that do not, and
  }
  \item{
    \label{adapt:c}
    rewriting generators so that the induced output has
    a full rank decoupling matrix.
  }
\end{enumerate}
{\blue Note that these steps are performed repeatedly throughout the algorithm.}
This section presents technical results that demonstrate how to perform steps~\ref{adapt:a} and ~\ref{adapt:b}.
Step~\ref{adapt:c} is presented in the proof of the main result.

Before discussing these subprocedures in any detail, we must know the dimension of the subspace in an ideal of the derived flag~\eqref{eqn:flag} that annihilates tangent vectors to \(\Manifold{L}.\)
The first lemma shows how the controllability condition~\eqref{controllability} determines this.
\begin{lemma}
  \label{lemma:1}
  If~\eqref{controllability}, then for all \(0 \leq k \leq n - n^*\)
  \[
    \Dim(
      \Ann(\TangentSpaceAt{\Manifold{L}}{p_0})
      \cap
      \SpanInline{\Real}{ \Codist{I}^{(k)}_{p_0}, \D t_{p_0}}
    )
    =
    1 + \textstyle\sum_{i = k}^{n - n^* - 1} \rho_{i},
  \]
  and \(n - n^* = \sum_{i = 0}^{n - n^* - 1}\rho_i.\)
\end{lemma}
\begin{proof}
  By~\eqref{controllability} we have
  \[
    \Dim(
      \Ann(\TangentSpaceAt{\Manifold{L}}{p_0})
      \cap
      \SpanInline{\Real}{ \Codist{I}^{(n - n^*)}_{p_0}, \D t_{p_0}}
    )
      =
        1
  \]
  so the formula holds for \(k = n - n^*.\)
  Suppose, by way of induction, that for some \(1 \leq k \leq n - n^*\)
  \begin{equation}
    \label{eqn:lemma:1:1}
    \Dim(
      \Ann(\TangentSpaceAt{\Manifold{L}}{p_0})
      \cap
      \SpanInline{\Real}{
        \Codist{I}^{(k)}_{p_0}, \D t_{p_0}
      }
    )
      =
        1 + \textstyle\sum_{i = k}^{n - n^* - 1} \rho_{i}.
  \end{equation}
  Consider the left-hand side of~\eqref{eqn:lemma:1:1}.
  By~\eqref{eqn:controlindex}
  \[
    \Dim(
      \Ann(\TangentSpaceAt{\Manifold{L}}{p_0})
      \cap
      \SpanWord_\Real\{
        \Codist{I}^{(k-1)}_{p_0},
        \D t_{p_0}
      \}
    )
      =
      \Dim(
        \Ann(\TangentSpaceAt{\Manifold{L}}{p_0})
        \cap
        \SpanWord_\Real\{
          \Codist{I}^{(k)}_{p_0},
          \D t_{p_0}
        \}
      )
      +
      \rho_{k-1}.
  \]
  Apply the inductive hypothesis~\eqref{eqn:lemma:1:1} and conclude
  \[
    \Dim(
      \Ann(\TangentSpaceAt{\Manifold{L}}{p_0})
      \cap
      \SpanInline{\Real}{
        \Codist{I}^{(k-1)}_{p_0},
        \D t_{p_0}
      }
    )
      =
        1
        +
        \textstyle\sum_{i = k - 1}^{n - n^* - 1} \rho_{i}.
  \]
  We now verify the final fact.
  First observe
  \[
    \SpanInline{\Real}{\Codist{I}^{(0)}_{p_0}, \D t_{p_0}}
    =
    \SpanInline{\Real}{\D x^1_{p_0}, \ldots, \D x^n_{p_0}, \D t_{p_0}}.
  \]
  By construction of \(\Manifold{L}\) we have
  \[
    \Ann(\TangentSpaceAt{\Manifold{L}}{p_0})
    \subseteq
    \SpanInline{\Real}{\D x^1_{p_0}, \ldots, \D x^n_{p_0}, \D t_{p_0}}.
  \]
  Therefore
  \[
    \Dim(
      \Ann(\TangentSpaceAt{\Manifold{L}}{p_0})
      \cap
      \SpanInline{\Real}{\Codist{I}^{(0)}_{p_0}, \D t_{p_0}}
    )
      =
        \Dim(
          \Ann(\TangentSpaceAt{\Manifold{L}}{p_0})
        )
      =
        1 + n - n^*.
  \]
  Combining this with the formula
  \[
    \Dim(
      \Ann(\TangentSpaceAt{\Manifold{L}}{p_0})
      \cap
      \SpanInline{\Real}{\Codist{I}^{(0)}_{p_0}, \D t_{p_0}}
    )
      =
        1 + \textstyle\sum_{i = 0}^{n - n^* - 1} \rho_i,
  \]
  completes the proof.
\end{proof}
The previous result concerned the ideal but not its differential closure.
Next we show that, when the involutivity condition holds, we can work with either the ideal or its differential closure as long as we are only concerned with the differentials that annihilate tangent vectors to \(\Manifold{L}.\)
\begin{proposition}
  \label{prop:invequals}
  If~\eqref{involutivity} holds on some open set \(\OpenSet{U}\) containing \(p_0,\) then for all \(p \in \OpenSet{U} \cap \Manifold{L}\)
  \[
    \Ann(\TangentSpaceAt{\Manifold{L}}{p})
    \cap
    \SpanInline{\Real}{\Codist{I}^{(k)}_{p}, \D t_p}
    =
    \Ann(\TangentSpaceAt{\Manifold{L}}{p})
    \cap
    \GenerateInline{\Ideal{I}^{(k)}, \D t}^{(\infty)}_p,
    \quad 0 \leq k \leq n - n^*.
  \]
\end{proposition}
\begin{proof}
  Fix \(p \in \OpenSet{U} \cap \Manifold{L}\) and intersect both sides of the involutivity condition~\eqref{involutivity} with \(\Ann(\TangentSpaceAt{\Manifold{L}}{p}).\)
  Use the fact that \(\GenerateInline{\Ideal{I}^{(k)}, \D t}^{(\infty)} \subseteq \langle \Ideal{I}^{(k)}, \D t\rangle\) to arrive at the equality.
\end{proof}
Up until this point, we have concerned ourselves with ideals in the flag~\eqref{eqn:flag} up to index \(n-n^*.\)
This is not necessary.
Observe that
\begin{equation}
  \label{eqn:rho-to-nstar}
  \rho_{\kappa_1} = \cdots = \rho_{n-n^*} = 0,
\end{equation}
by definition.
If the controllability condition~\eqref{controllability} holds, then it holds with \(n-n^*\) replaced by \(\kappa_1.\)
As a result, it suffices to look at the ideals in the flag~\eqref{eqn:flag} with indices up to and including \(\kappa_1.\)

As has already been mentioned, finding outputs of vector relative degree amounts to finding an appropriately adapted basis for the derived flag.
Recall that, when \(\Manifold{N} = \{ x_0 \},\) the transverse feedback linearization problem reduces to the exact state-space feedback linearization problem~\cite{Nielsen2008}.
As a result, we expect our algorithm to apply just as well to feedback linearization.
In the exact feedback linearization algorithm presented in~\cite{Gardner1992}, the generators for the differential ideals \(\langle \Ideal{I}^{(k)}, \D t\rangle^{(\infty)}\) are assumed to satisfy,
\begin{equation}
  \label{eqn:gsadapted}\blue
\begin{alignedat}{2}
  &\scriptstyle\langle\Ideal{I}^{(0)}, \D t\rangle^{(\infty)}&
    &{\scriptstyle=}
    \scriptstyle\langle\omega^1, \ldots, \ldots, \omega^n, \D t\rangle,\\
  &\scriptstyle\langle\Ideal{I}^{(1)}, \D t\rangle^{(\infty)}&
    &{\scriptstyle=}
    \scriptstyle\langle\omega^1, \ldots, \omega^{n - \rho_0}, \D t\rangle,\\
  & &
    &\phantom{'}{\scriptstyle\vdots}\\
  &\scriptstyle\langle\Ideal{I}^{(\kappa_1-1)}, \D t\rangle^{(\infty)}&
    &{\scriptstyle=}
    \scriptstyle\langle\omega^1, \omega^{2}, \D t\rangle,\\
  &\scriptstyle\langle\Ideal{I}^{(\kappa_1)}, \D t\rangle^{(\infty)}&
    &{\scriptstyle=}
    \scriptstyle\langle\D t\rangle.
\end{alignedat}
\end{equation}
The generators ``drop off'' as the derived flag is computed.
This is precisely what is meant by subprocedure~\ref{adapt:a}.
Recall, however, that we must not only find exact generators for the differential ideal, but ones that annihilate tangent vectors to \(\Manifold{L}.\)
Unfortunately, the process of finding these annihilating exact one-forms requires rewriting the generators of the ideal \(\langle \Ideal{I}^{(k)}, \D t\rangle\) which would ruin the adapted structure~\eqref{eqn:gsadapted}.
This is why we explicitly demonstrate that the re-adaptation process, subprocedure~\ref{adapt:a} in particular, can be applied when required.

Unfortunately, finding exact generators that annihilate tangent vectors to \(\Manifold{L}\) is itself challenging.
As seen in~\cite[Remark 2]{DSouza2021}, constructing annihilating one-forms directly from known exact forms can ruin their exactness.
The remark suggests solving another Cauchy problem to find the correct exact one-form.
This is tenable in the single-input case, but is not a satisfying solution in the multi-input case especially when seeking multiple, independent, scalar, transverse outputs.
To avoid this additional integration step, we introduce maps \(\blue H_k\) whose differential is the exact generator for \(\langle \Ideal{I}^{(k)}, \D t\rangle^{(\infty)}.\)
The re-adaptation process mentioned earlier, subprocedure~\ref{adapt:a}, amounts to rewriting the components of these smooth maps so that the image of the pullback \(\blue\Image (\J H_k)^*\) is preserved.

\begin{remark}
  \label{rem:WhyIntegrals}
  Manipulating the integrals of the exact generators suggests integrating all the Frobenius systems in the derived flag~\eqref{eqn:flag}.
  In Section~\ref{sec:example:algorithm}, we show that integration is only required for the differential ideals \(\langle \Ideal{I}^{(k)}, \D t\rangle^{(\infty)}\) at which \(k\) is a \emph{distinct} transverse controllability index.
\end{remark}

We start by showing the existence of the aforementioned maps.
The TFL conditions ---~\eqref{controllability}, \eqref{involutivity} and~\eqref{constantdim} --- allow us to construct the map \(\blue H_k\) explicitly with a specific rank deficiency when restricted to \(\Manifold{L}.\)
This deficiency will ultimately be used to construct the transverse outputs.
\begin{lemma}
  \label{lemma:3}
  If~\eqref{controllability} holds and there exists an open set \(\OpenSet{U}\) containing \(p_0\) where \eqref{involutivity} and~\eqref{constantdim} hold, then for every \(0 \leq k \leq n - n^*\) there exists an integer \(\ell_k \geq 1 + \sum_{i = k}^{n - n^* - 1} \rho_{i},\) a possibly smaller open neighbourhood \(\OpenSet{V}\subseteq \OpenSet{U}\) containing \(p_0,\) and a smooth map \(\blue H_k: \OpenSet{V} \to \Real^{\ell_k}\) satisfying the characteristic property
  \begin{equation}\blue
    \label{lemma:2:a}
    \GenerateInline{ \D H_k^1, \ldots, \D H_k^{\ell_k} }
      =
        \GenerateInline{\Ideal{I}^{(k)}, \D t}^{(\infty)},
  \end{equation}
  with constant rank, on \(\OpenSet{V}\cap \Manifold{L},\) equal to
  \[\blue
    \Rank \left.H_k\right|_{\OpenSet{V} \cap \Manifold{L}}
      =
        \ell_k - (1 + \textstyle\sum_{i = k}^{n - n^*-1} \rho_{i}).
  \]
\end{lemma}
\begin{proof}
  The differential ideal \(\langle \Ideal{I}^{(k)}, \D t\rangle^{(\infty)}\) is simply, finitely generated so, by Frobenius's Theorem, there exists \(\ell_k\) exact generators \(\D w^1,\) \(\ldots,\) \(\D w^{\ell_k}\) on some open neighbourhood \(\OpenSet{V} \subseteq \OpenSet{U}\) of \(p_0.\)
  Define a smooth map \(\blue H_k: \OpenSet{V} \to \Real^{\ell_k}\) by
  \[\blue
    H_k(p) \defineas \left( w^1(p), \ldots, w^{{\ell_k}-1}(p), t \right).
  \]
  Without loss of generality, take \(w^i\) to be smooth functions of the state.
  By construction, \(H_k\) satisfies the characteristic property~\eqref{lemma:2:a}.

  We already know, by Assumption~\ref{assume:regular}, that \(H_k\) has constant rank.
  It is not directly obvious that \(\left.H_k\right|_{\OpenSet{V}\cap \Manifold{L}}\) has constant rank as well.
  Since~\eqref{involutivity} holds over \(\OpenSet{V},\) invoke Proposition~\ref{prop:invequals} to find
  \begin{equation}
    \label{eqn:lemma:2:1}
    \Ann(\TangentSpaceAt{\Manifold{L}}{p})
    \cap
    \GenerateInline{\Ideal{I}^{(k)}, \D t}^{(\infty)}_p
    =
    \Ann(\TangentSpaceAt{\Manifold{L}}{p})
    \cap
    \SpanInline{\Real}{\Codist{I}^{(k)}_{p}, \D t_p},
  \end{equation}
  for all \(p \in \OpenSet{V} \cap \Manifold{L}.\)
  It then follows by~\eqref{constantdim} that
  \[
    \Dim\left(
      \Ann(\TangentSpaceAt{\Manifold{L}}{p})
      \cap
      \GenerateInline{\Ideal{I}^{(k)}, \D t}^{(\infty)}_p
    \right)
      =
        \text{constant}.
  \]
  Then use the characteristic property~\eqref{lemma:2:a} to determine that \(\left. H_k \right|_{\OpenSet{V} \cap \Manifold{L}}\) must have constant rank.

  We now proceed by directly computing its rank at a point \(p \in \OpenSet{V} \cap \Manifold{L}.\)
  Because the rank of \(\left.H_k\right|_{\OpenSet{V} \cap \Manifold{L}}\) is constant, it suffices to compute its rank at \(p_0.\)
  Compute the dimension on both sides of~\eqref{eqn:lemma:2:1} and invoke Lemma~\ref{lemma:1} to find,
  \[
    \Dim\left( 
      \Ann(\TangentSpaceAt{\Manifold{L}}{p_0})
      \cap
      \GenerateInline{\Ideal{I}^{(k)}, \D t}^{(\infty)}_{p_0}
     \right)
      =
        1 + \textstyle\sum_{i = k}^{n-n^*-1} \rho_i.
  \]
  It immediately follows that
  \[
    \Rank \left.H_k\right|_{\OpenSet{V} \cap \Manifold{L}}
      =
        \ell_k - ( 1 + \textstyle\sum_{i=k}^{n-n^*-1} \rho_i )
  \]
\end{proof}
The proof of Lemma~\ref{lemma:3} did not specifically rely on the way the map \(H_k\) was constructed (as the integral of a Frobenius system).
The proof holds without modification for any map \(H_k\) that satisfies the characteristic property~\eqref{lemma:2:a}.
The next corollary states this fact.
\begin{corollary}
  Suppose~\eqref{controllability} holds and there exists an open set \(\OpenSet{U}\) containing \(p_0\) where \eqref{involutivity} and~\eqref{constantdim} hold.
  If a smooth map \(H_k: \OpenSet{U} \to \Real^{\ell_k}\) satisfies the characteristic property~\eqref{lemma:2:a} then it has constant rank on \(\OpenSet{U}\cap \Manifold{L}\) equal to
  \[
    \Rank \left.H_k\right|_{\OpenSet{U} \cap \Manifold{L}}
      =
      {\ell_k} - (1 + \textstyle\sum_{i = k}^{n - n^* - 1} \rho_{i}).
  \]
\end{corollary}
We now ask whether the \(H_k\) can be ``adapted'' to include components that are constant (w.l.o.g. zero) on \(\Manifold{L}.\)
In general, level sets of the form \(H^i_k(p) = H^i_k(p_0)\) do not contain \(\Manifold{L},\) as depicted in Figure~\ref{fig:AdaptToL:b}.
Figure~\ref{fig:AdaptToL:a} shows how the corresponding generators for the codistribution \(\langle \Ideal{I}^{(k)}, \D t\rangle^{(\infty)}\) do not annihilate vectors tangent to \(\Manifold{L}\)  although the codistribution has a non-trivial intersection with \(\Ann(\TangentSpaceAt{\Manifold{L}}{p_0}).\)
We can use the rank deficiency of \(H_k\) on \(\Manifold{L}\) to construct an adaptation of \(H_k\) where the leading  \(1 + \sum_{i = k}^{n - n^* - 1} \rho_{i}\) components have level sets that locally contain \(\Manifold{L}.\)
Figure~\ref{fig:AdaptToL:c} shows the level sets of the newly rewritten \(\tilde{H}_k.\)
At a point \(p \in \Manifold{L},\) the leading components' differential lives in \(\Ann(\TangentSpaceAt{\Manifold{L}}{p})\) \(\cap\) \(\langle \Ideal{I}^{(k)}, \D t\rangle^{(\infty)}_{p}\) as seen in Figure~\ref{fig:AdaptToL:d}.
\begin{lemma}
  \label{lem:adaptToL}
  If \(H_k: \OpenSet{V} \to \Real^{\ell_k}\) is a smooth map satisfying the characteristic property~\eqref{lemma:2:a} and its restriction has constant rank equal to
  \[
    \Rank \left.H_k\right|_{\OpenSet{V} \cap \Manifold{L}} 
      =
        \ell_k - \rho,
  \]
  then there exists a new map \(\tilde{H}_k: \OpenSet{V} \to \Real^{\ell_k}\) which satisfies the characteristic property~\eqref{lemma:2:a} and, for all \(p \in \OpenSet{V}\cap \Manifold{L},\) \(v_p \in \TangentSpaceAt{\Manifold{L}}{p},\) \(i \in \{1,\ldots,\rho\},\) \(\D \tilde{H}_k^i(v_p) = 0.\)
\end{lemma}
\begin{proof}
  Apply~\cite[Rank Theorem (Proposition 4.12)]{Lee2012} and shrink \(\OpenSet{V}\) if necessary to find a coordinate chart \(\varphi\) for \(\Manifold{L}\) and a coordinate chart \(\psi\) for \(\Real^{\ell_{k}}\) so the composition \(\psi \circ \left. H_{k} \right|_{\OpenSet{V} \cap \Manifold{L}} \circ \varphi^{-1}\) takes the form
  \[
    \psi \circ \left.H_{k}\right|_{\OpenSet{V} \cap \Manifold{L}} \circ \varphi^{-1}
    =
      (
        0,\;\ldots,\;0,
        \star,\;\ldots,\;\star
      ),
  \]
  with \(\rho\) leading zeros.
  Define \(\tilde{H}_{k}\defineas \psi \circ H_{k}.\)
  The map \(\tilde{H}_{k}\) still satisfies the characteristic property~\eqref{lemma:2:a} but is now ``adapted'' so that the first \(\rho\) components vanish on \(\Manifold{L}\) and, consequently, their differentials must annihilate tangent vectors to \(\Manifold{L}\) as required.
\end{proof}
\begin{figure}
  \centering
  \begin{subfigure}[t]{0.49\textwidth}
    \centering
    \begin{tikzpicture}[x=1in, y=1in, scale=0.9]
      \node[draw=none, fill=none, anchor={south west}] at (0,0) {%
        \includegraphics[scale=0.9]{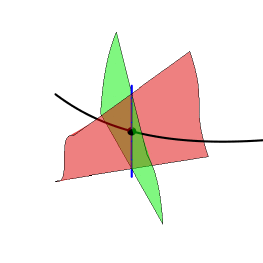}%
      };
      \node[at={(1.7, 0.9)}, anchor={base}, font={\footnotesize}] (lblL) {%
        \(\Manifold{L}\)%
      };
      \node[at={(1.25, 1.25)}, anchor={base}, font={\footnotesize}] (lblp0) {%
        \(p_0\)%
      };
      \node[at={(1.75, 0)}, anchor={south east}, font={\footnotesize}] (lblF1) {%
        \(H_k^{1}(p) = H_k^1(p_0)\)%
      };
      \node[at={(0, 0.35)}, anchor={west}, font={\footnotesize}] (lblF2) {%
        \(H_k^{2}(p) = H_k^2(p_0)\)%
      };
      \draw[-{Stealth}]
        (lblp0.south west) -- (0.95, 0.95);
      \draw[-{Stealth}]
        (lblF2.east) --+ (0.07, 0.07);
      \draw[-{Stealth}]
        (lblF1.north) --+ (0, 0.5);
    \end{tikzpicture}
    \caption{%
      The intersection of the level sets of \(H_k^i\) form an integral submanifold (light blue) of \(\langle \Ideal{I}^{(k)}, \D t\rangle^{(\infty)}\) passing through \(p_0.\)
      Neither level set contains \(\Manifold{L}.\)
    }
    \label{fig:AdaptToL:b}
  \end{subfigure}
  \hfill
  \begin{subfigure}[t]{0.49\textwidth}
    \centering
    \begin{tikzpicture}[x=1in, y=1in, scale=0.9]
      \node[draw=none, fill=none, anchor={south west}] at (0,0) {%
        \includegraphics[scale=0.9]{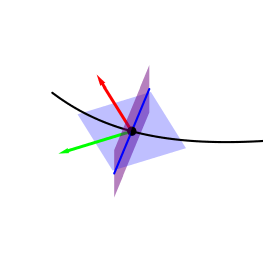}%
      };
      
      \node[at={(1.7, 0.9)}, anchor={base}, font={\footnotesize}] (lblL) {%
        \(\Manifold{L}\)%
      };
      \node[at={(1.75, 0)}, anchor={south east}, font={\footnotesize}] (lblCodist) {%
        \(\langle \Ideal{I}^{(k)}, \D t\rangle^{(\infty)}_{p_0}\)%
      };
      \draw[-{Stealth}]
        (lblCodist.north) |- (1.1, 0.75);

      \node[at={(1.75, 1.75)}, anchor={north east}, font={\footnotesize}] (lblAnn) {%
        \(\Ann(\TangentSpaceAt{\Manifold{L}}{p_0})\)%
      };
      \draw[-{Stealth}]
        (lblAnn.west) -| (1.04, 1.35);
      
      \node[at={(0.7, 1.3)}, anchor={south}, font={\footnotesize}] {%
        \((\D H_k^{1})_{p_0}\)%
      };
      \node[at={(0, 0.70)}, anchor={west}, font={\footnotesize}] {%
        \((\D H_k^{2})_{p_0}\)%
      };
    \end{tikzpicture}
    \caption{%
      The original choice of generators do not annihilate vectors tangent to \(\Manifold{L}.\)
    }
    \label{fig:AdaptToL:a}
  \end{subfigure}  
  \\
  \begin{subfigure}[t]{0.49\textwidth}
    \centering
    \begin{tikzpicture}[x=1in, y=1in, scale=0.9]
      \node[draw=none, fill=none, anchor={south west}] at (0,0) {%
        \includegraphics[scale=0.9]{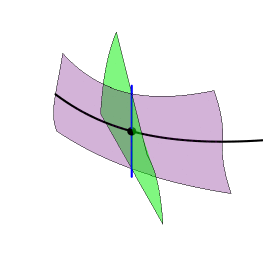}%
      };
      \node[at={(1.7, 0.9)}, anchor={base}, font={\footnotesize}] (lblL) {%
        \(\Manifold{L}\)%
      };
      \node[at={(1.75, 0)}, anchor={south east}, font={\footnotesize}] (lblF1) {%
        \(\tilde{H}_k^{1}(p) = \tilde{H}_k^{1}(p_0)\)%
      };
      \draw[-{Stealth}]
        (lblF1.north) --+ (0, 0.25);
    \end{tikzpicture}
    \caption{%
      There exists a map \(\tilde{H}_k\) so that the zero locus of the leading components contain \(\Manifold{L}\) while preserving the integral submanifold.
    }
    \label{fig:AdaptToL:c}
  \end{subfigure}
  \hfill
  \begin{subfigure}[t]{0.49\textwidth}
    \centering
    \begin{tikzpicture}[x=1in, y=1in, scale=0.9]
      \node[draw=none, fill=none, anchor={south west}] at (0,0) {%
        \includegraphics[scale=0.9]{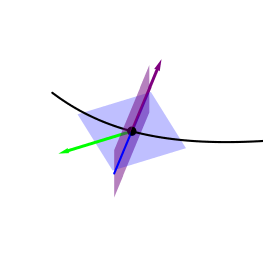}%
      };
      \node[at={(1.7, 0.9)}, anchor={base}, font={\footnotesize}] (lblL) {%
        \(\Manifold{L}\)%
      };
      \node[at={(1.3, 1.45)}, anchor={base}, font={\footnotesize}] {%
        \((\D \tilde{H}_k^1)_{p_0}\)%
      };
      \node[at={(0, 0.75)}, anchor={west}, font={\footnotesize}] {%
        \((\D \tilde{H}_k^{2})_{p_0}\)%
      };
    \end{tikzpicture}
    \caption{%
      The smooth one-form \(\D \tilde{H}^{1}\) lives in \(\langle \Ideal{I}^{(k)}, \D t\rangle^{(\infty)}\) and annihilates the tangent space of \(\Manifold{L}.\)
    }
    \label{fig:AdaptToL:d}
  \end{subfigure}
  \caption{%
    An arbitrary set of generators (red, green) for the codistribution \(\langle \Ideal{I}^{(k)}, \D t\rangle^{(\infty)}\) (blue) is adapted to annihilate the tangent space of \(\Manifold{L}\) (black).
  }
  \label{fig:AdaptToL}
\end{figure}
The established facts ensure that, assuming~\eqref{controllability} holds at \(p_0\) and there exists an open set \(\OpenSet{U}\) so that~\eqref{involutivity} and~\eqref{constantdim} hold, we can write, for all \(p \in \OpenSet{U} \cap \Manifold{L},\)
\begin{equation}
  \label{eqn:unadapted}\blue
\begin{alignedat}{2}
  &\scriptstyle\Ann(\TangentSpaceAt{\Manifold{L}}{p})
  \cap
  \langle\Ideal{I}^{(0)}, \D t\rangle^{(\infty)}_p&
    &{\scriptstyle=}
    \scriptstyle\SpanWord\{\D H_0^1, \ldots, \ldots, \ldots, \D H_0^{n - n^*}, \D t\},\\
  &\scriptstyle\Ann(\TangentSpaceAt{\Manifold{L}}{p})
  \cap
  \langle\Ideal{I}^{(1)}, \D t\rangle^{(\infty)}_p&
    &{\scriptstyle=}
    \scriptstyle\SpanWord\{\D H_1^1, \ldots, \ldots, \D H_1^{n - n^* - \rho_0}, \D t\},\\
  & &    
    &\phantom{'}{\scriptstyle\vdots}\\
  &\scriptstyle\Ann(\TangentSpaceAt{\Manifold{L}}{p})
  \cap
  \langle\Ideal{I}^{(\kappa_1 - 1)}, \D t\rangle^{(\infty)}_p&
    &{\scriptstyle=}
    \scriptstyle\SpanWord\{\D H_{\kappa_1-1}^1, \ldots, \D H_{\kappa_1-1}^{\rho_{\kappa_1 - 1}}, \D t\},\\
  &\scriptstyle\Ann(\TangentSpaceAt{\Manifold{L}}{p})
  \cap
  \langle\Ideal{I}^{(\kappa_1)}, \D t\rangle^{(\infty)}_p&
    &{\scriptstyle=}
    \scriptstyle\SpanWord\{ \D t \}.
\end{alignedat}
\end{equation}
None of the previous results guarantee that components of \(H_{k}\) are also components of \(H_{k-1}\) even though we know that
\[
  \GenerateInline{\D H_k^1, \ldots, \D H_k^{\ell_k}}
    =
    \GenerateInline{\Ideal{I}^{(k)}, \D t}^{(\infty)}
    \subseteq
    \GenerateInline{\Ideal{I}^{(k - 1)}, \D t}^{(\infty)}
    =
      \GenerateInline{\D H_{k-1}^1, \ldots, \D H_{k-1}^{\ell_{k-1}}}
\]
The coming lemmas ensure that we can always rewrite~\eqref{eqn:unadapted} as the partially adapted basis,
\begin{equation}
  \label{eqn:adapted}\blue
\begin{alignedat}{2}
  &\scriptstyle\Ann(\TangentSpaceAt{\Manifold{L}}{p})
  \cap
  \langle\Ideal{I}^{(0)}, \D t\rangle^{(\infty)}_p&
    &{\scriptstyle=}
    \scriptstyle\SpanWord\{
        \D H_{\kappa_1 - 1}^1, \ldots, \ldots, \ldots, \D H_0^{n - n^*}, \D t
      \},\\
  &\scriptstyle\Ann(\TangentSpaceAt{\Manifold{L}}{p})
  \cap
  \langle\Ideal{I}^{(1)}, \D t\rangle^{(\infty)}_p&
    &{\scriptstyle=}
    \scriptstyle\SpanWord\{
        \D H_{\kappa_1 - 1}^1, \ldots, \ldots, \D H_1^{n - n^* - \rho_0}, \D t
      \},\\
  && 
    &\phantom{'}{\scriptstyle\vdots}\\
  &\scriptstyle\Ann(\TangentSpaceAt{\Manifold{L}}{p})
  \cap
  \langle\Ideal{I}^{(\kappa_1 - 1)}, \D t\rangle^{(\infty)}_p&
    &{\scriptstyle=}
      \scriptstyle\SpanWord\{
        \D H_{\kappa_1 - 1}^1, \ldots, \D H_{\kappa_1 - 1}^{\rho_{\kappa_1 - 1}}, \D t
      \},\\
  &\scriptstyle\Ann(\TangentSpaceAt{\Manifold{L}}{p})
  \cap
  \langle\Ideal{I}^{(\kappa_1)}, \D t\rangle^{(\infty)}_p&
    &{\scriptstyle=}
      \scriptstyle\SpanWord\{\D t\}.
\end{alignedat}
\end{equation}
Pay close attention to the subtle difference between~\eqref{eqn:adapted} and~\eqref{eqn:unadapted}:
if a differential appears as a generator in one ideal, it appears as a generator in all the preceding ideals of the derived flag.
We would like the components of our smooth maps to satisfy this property.
To do this, noting that \(\ell_k \geq \ell_{k+1},\) define the projection \(P_{k}: \Real^{\ell_{k+1}}\times \Real^{\ell_{k} - \ell_{k+1}} \to \Real^{\ell_{k+1}},\) \(P(x,y) = x.\)
Then, rewrite the components of \({H}_k\) so that the following diagram commutes.
\begin{center}
  \blue
  \begin{tikzcd}[column sep=large]
      \OpenSet{V} \subseteq \Manifold{M} \arrow[r, "{\Identity}"] \arrow[dd, "{{H}_{0}}"]
        &
          \OpenSet{V} \subseteq \Manifold{M} \arrow[r, "{\Identity}"] \arrow[dd, "{{H}_{1}}"]
        &
          \cdots \arrow[r, "{\Identity}"]
        &
          \OpenSet{V} \subseteq \Manifold{M} \arrow[r, "{\Identity}"] \arrow[dd, "{{H}_{n - n^* - 1}}"]
        &
          \OpenSet{V} \subseteq \Manifold{M} \arrow[dd, "{H_{n - n^*}}"]\\
      & & & & \\
      \Real^{\ell_0} \arrow[r, "{P_0}"]
        &
          \Real^{\ell_1} \arrow[r, "{P_1}"]
        &
          \cdots \arrow[r, "{P_{n-n^*-2}}"]
        &
          \Real^{\ell_{n - n^* - 1}} \arrow[r, "{P_{n-n^*-1}}"]
        &
          \Real^{\ell_{n - n^*}}
  \end{tikzcd}
\end{center}
Maps \(H_k\) that make this diagram commute have components that are subsumed in the ``larger'' maps \(H_{k-1},\) \(\ldots,\) \(H_0.\)
This can be done, up to a reordering in the projection, to preserve the fact that the leading components of \(H_k\) vanish on \(\Manifold{L}.\)
To prove that such a construction is possible, we need only prove that a smaller adaptation is possible.
\begin{proposition}
  \label{prop:adaptF}
  \blue
  Let \(k \geq 0,\) \(\ell_k > \ell_{k+1} > 0,\) let \(P_k: \Real^{\ell_{k+1}} \times \Real^{\ell_{k}-\ell_{k+1}} \to \Real^{\ell_{k+1}},\) \(P_k(x,y) = x,\) and let \(\OpenSet{U}\) be an open set containing \(p_0.\)
  If \(H_{k+1}: \OpenSet{U} \to \Real^{\ell_{k+1}}\) and \(H_{k}: \OpenSet{U} \to \Real^{\ell_{k}}\) are smooth maps satisfying the characteristic property~\eqref{lemma:2:a}, then on a possibly smaller open set \(\OpenSet{V}\) containing \(p_0,\) there exists a smooth map \(\tilde{H}_{k}: \OpenSet{V} \to \Real^{\ell_{k}}\) that makes the diagram,
  \begin{center}
    \begin{tikzcd}
      \OpenSet{V} \subseteq \Manifold{M}
      \arrow{d}[left]{\tilde{H}_k}
      \arrow{rrd}[above right]{H_{k+1}}
        &
        &
      \\
      \Real^{\ell_k}
      \arrow[rr, "P_k"]
        &
        &
          \Real^{\ell_{k+1}}
    \end{tikzcd}
  \end{center}
  commute and 
  \(
    \GenerateInline{\D \tilde{H}_k^1, \ldots, \D \tilde{H}_k^{\ell_k}}
  \)
  \(
    =
  \)
  \(
    \GenerateInline{\D H_k^1, \ldots, \D H_k^{\ell_k}}
  \)
  \(
    =
  \)
  \(
    \GenerateInline{\Ideal{I}^{(k)}, \D t}^{(\infty)}.
  \)
\end{proposition}
\begin{proof}
  Write \(H_{k+1}\) \(=\) \((H_{k+1}^1,\) \(\ldots,\) \(H_{k+1}^{\ell_{k+1}}).\)
  Since \(\langle \Ideal{I}^{(k+1)}, \D t \rangle^{(\infty)}\) is contained in \(\langle \Ideal{I}^{(k)}, \D t \rangle^{(\infty)}\) we know that
  \[
    \GenerateInline{\D H_{k+1}^1, \ldots, \D H_{k+1}^{\ell_{k+1}}}
    \subseteq
    \GenerateInline{\D H_k^1, \ldots, \D H_k^{\ell_k}}.
  \]
  It follows that we can pick the \(\ell_{k+1} - \ell_{k}\) components of \(H_{k}\) that are differentially independent from the components \(H_{k+1}^i\) at \(p_0.\)
  Take these differentially independent components of \(H_{k+1}\) to be the last \(\ell_{k+1} - \ell_k\) components without loss of generality.
  Define
  \[
    \tilde{H}_k
      \defineas
        (
          H_{k+1}^1, \ldots, H_{k+1}^{\ell_{k+1}},
          H_{k}^{\ell_{k+1} - \ell_k + 1}, \ldots, H_{k}^{\ell_{k}}
        ),
  \]
  and observe that, on a sufficiently small open set \(\OpenSet{V}\) containing \(p_0,\) this map will satisfy
  \[
    \GenerateInline{\D \tilde{H}_k^1, \ldots, \D \tilde{H}_k^{\ell_k}}
    =
    \GenerateInline{\D H_k^1, \ldots, \D H_k^{\ell_k}}
  \]
  and \(P_k \circ \tilde{H}_k = H_{k+1}.\)
\end{proof}
Proposition~\ref{prop:adaptF} implies that, for every \(0 \leq k \leq n-n^*-1,\) there exists \(H_k, H_{k+1}\) so that \(H_{k+1} = P_k \circ H_k.\)
Geometrically, the level sets of the components of \(H_{k+1}\) are subsumed by the level sets of the components of \(H_k.\)
This is depicted in Figure~\ref{fig:AdaptToF}.
Together with Lemma \ref{lem:adaptToL}, we can find a sequence of maps whose leading components vanish on \(\Manifold{L}\) while making the aforementioned diagram commute (up to a reordering in the projections).
The next corollary states this fact.
\begin{corollary}
  \label{cor:adaptedbasis}
  If~\eqref{controllability} holds and there exists an open set \(\OpenSet{U}\subseteq \Manifold{M}\) containing \(p_0\) where \eqref{involutivity} and~\eqref{constantdim} hold then there exists a possibly smaller open set \(\OpenSet{V} \subseteq \Manifold{U}\) containing \(p_0\) and a sequence of smooth maps \(H_0,\) \(\ldots,\) \(H_{n - n^*}\) so that:
  \begin{enumerate}[label={(\arabic*)}]
    \item{
      each map \(H_k\) satisfies the characteristic property~\eqref{lemma:2:a},
    }
    \item{
      for all \(0 \leq k \leq n-n^*-1,\) \(H_{k+1} = P_k \circ H_k\) where \(P_k: \Real^{\ell_{k+1}} \times \Real^{\ell_{k}-\ell_{k+1}} \to \Real^{\ell_{k+1}}\) is a projection onto the leading \(\ell_{k+1}\) components of \(\Real^{\ell_{k}},\) and
    }
    \item{
      for all \(0 \leq k \leq n - n^*\) the leading \(1 + \sum_{i = k}^{n - n^* - 1} \rho_{i}\) components of \(H_k\) vanish on \(\Manifold{L}.\) 
    }
  \end{enumerate}
\end{corollary}
Corollary~\ref{cor:adaptedbasis} encodes subprocedures~\ref{adapt:a} and~\ref{adapt:b} as presented at the start of this section.
It assures us that there exists a set of generators which ``drop off'' on computing the derived flag while explicitly expressing the components with differentials that annihilate tangent vectors to \(\Manifold{L}.\)
Note, however, that we still do not know what the transverse output is.
\begin{figure}[h]
  \centering
  \begin{subfigure}[t]{0.25\textwidth}
    \centering
    \begin{tikzpicture}[x=1in, y=1in]
      \node[draw=none, fill=none, anchor={south west}] at (0,0) {%
        \includegraphics[scale=0.5]{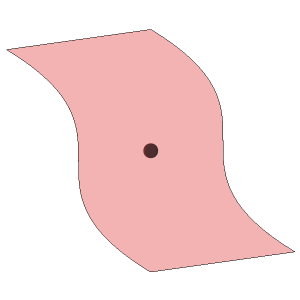}%
      };

      \node[at={(0.5, 0.45)}, anchor={west}, font={\scriptsize}] (lblp0) {%
        \(p_0\)%
      };
      
      \node[at={(1.2, 0)}, anchor={south east}, font={\scriptsize}] (lblF1) {%
      \(H_{k+1}^1(p) = H_{k+1}^1(p_0)\)%
      };
    \end{tikzpicture}
  \end{subfigure}
  \begin{subfigure}[t]{0.25\textwidth}
    \centering
    \begin{tikzpicture}[x=1in, y=1in]
      \node[draw=none, fill=none, anchor={south west}] at (0,0) {%
        \includegraphics[scale=0.5]{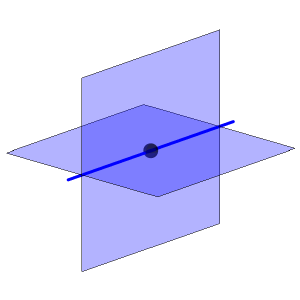}%
      };

      \node[at={(1.2, 0)}, anchor={south east}, font={\scriptsize}] {%
      \(H_{k}^1(p) = H_{k}^1(p_0)\)%
      };

      \node[at={(1.2, 1.1)}, anchor={north east}, font={\scriptsize}] (lblF2) {%
      \(H_{k}^2(p) = H_{k}^2(p_0)\)%
      };
      \draw[-{Stealth}]
        (lblF2.west) -| (0.1, 0.55);

    \end{tikzpicture}
  \end{subfigure}
  \begin{subfigure}[t]{0.25\textwidth}
    \centering
    \begin{tikzpicture}[x=1in, y=1in]
      \node[draw=none, fill=none, anchor={south west}] at (0,0) {%
        \includegraphics[scale=0.5]{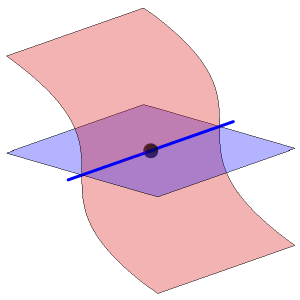}%
      };
    \end{tikzpicture}
  \end{subfigure}
  \caption{%
    A depiction of Proposition~\ref{prop:adaptF}.
    The level sets of components of \(H_k\) subsume those of \(H_{k+1}\) while preserving the image of their differentials.
  }
  \label{fig:AdaptToF}
\end{figure}

\section{The Proposed Algorithm}
\label{sec:proposedalgorithm}

\begingroup\blue
It was purported that the proof that the conditions~\eqref{controllability},~\eqref{involutivity} and~\eqref{constantdim} are sufficient for transverse feedback linearization employs the proposed algorithm.
This is precisely what we aim to show in this section.

\subsection{The Geometry of the Algorithm}
\label{sec:geometryofalg}
The proposed algorithm produces a flag of (locally) closed, embedded submanifolds,
\[
  \Real^n
    \supseteq 
      \pi(\OpenSet{U})
    \eqqcolon
      \Manifold{Z}^{(n-n^*+1)}
    \supseteq
      \Manifold{Z}^{(n-n^*)}
    \supseteq
      \cdots
    \supseteq
      \Manifold{Z}^{(2)}
    \supseteq
      \Manifold{Z}^{(1)}
    =
      \pi(\OpenSet{U}) \cap \Manifold{N}.
\]
Each manifold \(\Manifold{Z}^{(i)}\) is constructed as the local zero dynamics manifold containing \(x_0\) of an incomplete transverse output with respect to \(\Manifold{N}.\)
The scalar outputs used to construct \(\Manifold{Z}^{(i)}\) are all the components of the transverse output for \(\Manifold{N}\) which have relative degree greater than, or equal to, \(i.\)

As a result, one can take the following perspective on the algorithm.
Informally, the algorithm transverse feedback linearizes the system dynamics with respect to \(\Manifold{N}\) by finding those scalar outputs that are transverse to \(\Manifold{N}\) and have the \emph{largest possible} relative degree \(\kappa_1.\)
The algorithm proceeds by finding outputs that yield a lower relative degree, are transverse to \(\Manifold{N}\) but \emph{not} transverse to the zero dynamics manifold \(\Manifold{Z}^{(\kappa_1)}\) induced by the already known outputs.
The new outputs can then be combined with the known outputs to yield a relative degree with a smaller zero dynamics manifold.
The process repeats until the zero dynamics manifold agrees with \(\Manifold{N}\) locally.
To simplify the discussion involving the outputs and their associated zero dynamics manifolds, we define a special class of controlled-invariant set.
\begin{definition}
  \label{def:RegularZeroDynamics}
  A closed, embedded submanifold \(\Manifold{Z} \subseteq \Real^n\) is said to be a \textbf{regular zero dynamics manifold of type }\(\bm{(\ell, \kappa)}\)\textbf{ (at }\(\bm{x_0}\)\textbf{)} for~\eqref{eqn:system}, where \(1 \leq \ell \leq n\) and \(\kappa\in\Naturals^\ell,\) if there exists an open set \(\OpenSet{U} \subseteq \Real^n\) containing \(x_0\) and a smooth function \(h : \OpenSet{U} \to \Real^\ell\) so that
  \begin{enumerate}[label={(\arabic*)}]
    \item{the system~\eqref{eqn:system} with output \(h\) yields a vector relative degree \(\kappa = (\kappa_1, \ldots, \kappa_\ell)\) at \(x_0,\) and}
    \item{the zero dynamics manifold for \(h\) coincides with \(\OpenSet{U} \cap \Manifold{Z}.\)}
  \end{enumerate}
\end{definition}
Although not explicit, Definition~\ref{def:RegularZeroDynamics} implies that a regular zero dynamics manifold \(\Manifold{Z}\) of type \((\ell, \kappa)\) has dimension \(n - \sum_{i = 1}^\ell \kappa_i.\)
It is also clear from Definition~\ref{def:RegularZeroDynamics} that regular zero dynamics manifolds are controlled-invariant sets;
the converse is clearly not true.
Since all of the \(\Manifold{Z}^{(i)}\) in the proposed algorithm's flag are constructed as the zero dynamics manifold associated to some output for~\eqref{eqn:system}, the proposed algorithm produces a flag of regular zero dynamics manifolds containing \(\Manifold{N}.\)
We can restate Theorem~\ref{thm:LTFL-Nielsen-3.1} in the language of Definition~\ref{def:RegularZeroDynamics}.
\begin{theorem}
  \label{thm:LTFL-TowerControlled}
  The local transverse feedback linearization problem is solvable at \(x_0\) if, and only if, there exists constants \(\rho_0 \in \Naturals\) and \(\kappa = (\kappa_1, \ldots, \kappa_{\rho_0}) \in \Naturals^{\rho_0}\) so that \(\Manifold{N}\) is a regular zero dynamics manifold of type \((\rho_0, \kappa)\) at \(x_0.\)
\end{theorem}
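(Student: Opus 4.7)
The proof proceeds by showing the two characterizations of solvability encode the same data. Both Theorem~\ref{thm:LTFL-Nielsen-3.1} and Definition~\ref{def:RegularZeroDynamics} (applied with \(\Manifold{Z} = \Manifold{N}\)) hinge on a single smooth output \(h : \OpenSet{U} \to \Real^{\rho_0}\) of well-defined vector relative degree \(\kappa\) at \(x_0\); the difference is that Theorem~\ref{thm:LTFL-Nielsen-3.1} asks for \(\OpenSet{U}\cap\Manifold{N} \subseteq h^{-1}(0)\) together with the count \(\sum_i \kappa_i = n - n^*\), while Definition~\ref{def:RegularZeroDynamics} asks for the local zero dynamics manifold of \(h\) through \(x_0\) to coincide with \(\OpenSet{U}\cap \Manifold{N}\). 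The plan is to show these two additional requirements are equivalent under the common hypothesis of well-defined vector relative degree.

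For the sufficiency direction, suppose \(\Manifold{N}\) is a regular zero dynamics manifold of type \((\rho_0,\kappa)\), witnessed by some output \(h\). Then the zero dynamics manifold of \(h\) is contained in \(h^{-1}(0)\) and, by hypothesis, equals \(\OpenSet{U}\cap\Manifold{N}\); this yields condition~\ref{thm:LTFL-Nielsen-3.1:1}. Condition~\ref{thm:LTFL-Nielsen-3.1:2}, aside from the dimension count, is exactly property (1) of Definition~\ref{def:RegularZeroDynamics}. The equality \(\sum_i \kappa_i = n - n^*\) follows from the remark after Definition~\ref{def:RegularZeroDynamics}: the regular zero dynamics manifold has dimension \(n - \sum_i \kappa_i\), and this must match \(\dim \Manifold{N} = n^*\). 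Invoking Theorem~\ref{thm:LTFL-Nielsen-3.1} then delivers solvability.

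For the necessity direction, suppose the local TFL problem is solvable. Theorem~\ref{thm:LTFL-Nielsen-3.1} supplies an output \(h\) of vector relative degree \(\kappa\) at \(x_0\) with \(\sum_i \kappa_i = n - n^*\) and \(\OpenSet{U}\cap \Manifold{N} \subseteq h^{-1}(0)\). Let \(\Manifold{Z}^* \subseteq \OpenSet{U}\) be the local zero dynamics manifold of \(h\) through \(x_0\); well-defined vector relative degree guarantees that \(\Manifold{Z}^*\) is an embedded submanifold of dimension \(n - \sum_i \kappa_i = n^*\). I would next establish \(\OpenSet{U}\cap \Manifold{N} \subseteq \Manifold{Z}^*\) by an induction on \(k\), proving that \(L_{\tilde{f}}^k h^i\) vanishes on \(\Manifold{N}\) for every \(0 \leq k \leq \kappa_i - 1\) and every \(i\): the base case is the hypothesis \(h^i|_{\Manifold{N}} = 0\); for the step, since the closed-loop drift \(\tilde{f} + \sum_j \tilde{g}_j u_*^j\) is tangent to \(\Manifold{N}\) and \(L_{\tilde{f}}^k h^i|_{\Manifold{N}} = 0\) by inductive hypothesis, a vanishing Lie derivative along this closed-loop field, combined with the structural zeros \(L_{\tilde{g}_j} L_{\tilde{f}}^k h^i \equiv 0\) (for \(k \leq \kappa_i - 2\)) forced by the vector relative degree, yields \(L_{\tilde{f}}^{k+1} h^i|_{\Manifold{N}} = 0\). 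Since \(\Manifold{Z}^*\) is precisely the joint vanishing locus of these drift derivatives, we obtain \(\OpenSet{U}\cap \Manifold{N} \subseteq \Manifold{Z}^*\). As both sets are embedded submanifolds of dimension \(n^*\) sharing the point \(x_0\), they must agree in a neighbourhood of \(x_0\); shrinking \(\OpenSet{U}\) delivers property (2) of Definition~\ref{def:RegularZeroDynamics}.

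The main technical step is the induction establishing \(L_{\tilde{f}}^k h^i|_{\Manifold{N}} = 0\). It leans on two structural facts acting in tandem: controlled invariance of \(\Manifold{N}\) under \(u_*\) turns the closed-loop drift into a vector field tangent to \(\Manifold{N}\), while the decoupling identities of the vector relative degree let us replace closed-loop Lie derivatives by pure drift Lie derivatives on \(\Manifold{N}\). Once the set-theoretic containment is in hand, promoting it to local equality is a standard dimension-matching argument for embedded submanifolds through a common point, and thus presents no further obstacle.
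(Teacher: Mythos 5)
Your proof is correct. The paper offers no proof of this theorem at all---it is stated as a direct restatement of Theorem~\ref{thm:LTFL-Nielsen-3.1} in the language of Definition~\ref{def:RegularZeroDynamics}---and your argument supplies exactly the details being glossed over, the only nontrivial one being the induction in the necessity direction showing \(\Lie_{\tilde f}^{k} h^i\) vanishes on \(\Manifold{N}\) (via tangency of the closed-loop drift under \(u_*\) together with the decoupling zeros from the relative degree), after which the dimension count \(\sum_i \kappa_i = n - n^*\) upgrades the containment \(\OpenSet{U}\cap\Manifold{N}\subseteq \Manifold{Z}^*\) to local equality.
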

Theorem~\ref{thm:LTFL-TowerControlled} implies that the proposed algorithm produces a descending flag of manifolds that are transverse feedback linearizable at \(x_0.\)
% This formulation gives geometric intuition behind the algorithm for transverse feedback linearization, but also to algorithms for feedback linearization methods more broadly.
% The GS algorithm proposed in~\cite{Gardner1992} and Blended algorithm proposed in~\cite{Mehra2014} both involve adapting generators starting with the smallest (last) ideal in the derived flag~\eqref{eqn:flag} and working backwards to the system ideal \(\Ideal{I}^{(0)}.\)
% This can be viewed as constructing a sequence of components for the feedback linearizing output starting with those components that yield the highest relative degree to those that yield the lowest relative degree.
% Implicitly, this produces a descending flag of regular zero dynamics manifolds that terminates at the one-point set \(\{x_0\}.\)
% Partial feedback linearization, which is concerned with finding the largest feedback linearizable subsystem, could be cast in the same light by instead asking that the last regular zero dynamics manifold in the flag be as small (in dimension) as possible.

\subsection{The Proof}
\label{sec:mainproof}
We are ready to address the more interesting direction of the proof for Theorem~\ref{thm:tfl}.
This direction of the proof demonstrates how to sequentially construct the transverse output that is used to perform transverse feedback linearization.
\endgroup
\begin{proof}[Proof of Theorem~\ref{thm:tfl} (Sufficiency)]
  Suppose~\eqref{controllability} holds at \(p_0\) and that there exists an open set \(\OpenSet{U} \subseteq \Manifold{M}\) of \(p_0\) on which conditions~\eqref{involutivity} and~\eqref{constantdim} hold.
  At the start of the algorithm we set \(\Manifold{Z}^{(n-n^*+1)} \defineas \pi(\OpenSet{U})\) since there cannot be a scalar transverse output for \(\Manifold{N}\) with relative degree greater than \(n-n^*.\)

  By Corollary~\ref{cor:adaptedbasis} there exists a sequence of smooth maps \(H_0,\) \(\ldots,\) \(H_{n - n^*}\) defined on an open set \(\OpenSet{V} \subseteq \OpenSet{U}\) containing \(p_0\) that satisfy the characteristic property~\eqref{lemma:2:a}, i.e., for any fixed \(0 \leq k \leq n - n^*,\)
  \[
    \GenerateInline{ \D H_k^1, \ldots, \D H_k^{\ell_k} }
    =
    \GenerateInline{ \Ideal{I}^{(k)}, \D t }^{(\infty)}.
  \]
  Furthermore, the leading components of these maps vanish on \(\Manifold{L}\) so we may write, for any \(0 \leq k \leq n - n^*,\)
  \[
    H_k
      =
      (
        \underbrace{
          H_{k}^1, \ldots, t
        }_{%
          \text{vanish on}\:\Manifold{L}%
        },
        \underbrace{
          \ldots, \ldots, \ldots, H_k^{\ell_k}
        }_{%
          \text{not constant on}\:\Manifold{L}%
        }
      ).
  \]
  Now recall the discussion surrounding~\eqref{eqn:rho-to-nstar}.
  We already know that the \(t\) component of \(H_{n-n^*}\) is the only component that vanishes on \(\Manifold{L}.\)
  However, since \(\rho_{\kappa_1}\) \(=\) \(\cdots\) \(=\) \(\rho_{n-n^*-1} = 0,\) we can also say the same thing about the map \(F_{\kappa_1}.\)
  That is,
  \begin{equation}
    \label{eqn:main:FKappa1}
    H_{\kappa_1}
    =
    (
      t,
      \underbrace{
        H_{\kappa_1}^2, \ldots, \ldots, H_{\kappa_1}^{\ell_{\kappa_1}}
      }_{%
        \text{not constant on}\:\Manifold{L}%
      }
    ),
  \end{equation}
  As a result, we start our algorithm at index \(\kappa_1.\)

  Set \(\Manifold{Z}^{(\kappa_1 + 1)}\) \(\defineas\) \(\Manifold{Z}^{(n-n^*+1)}\) \(=\) \(\pi(\OpenSet{U}),\) and consider the map \(H_{\kappa_1-1}.\)
  By definition, \(\rho_{\kappa_1 - 1} > 0.\)
  Therefore, we can write
  \[
    H_{\kappa_1 - 1}
    =
    (
      \underbrace{
        H_{\kappa_1 - 1}^1,
        \ldots,
        H_{\kappa_1 - 1}^{\rho_{\kappa_1 - 1}},
        t
      }_{\text{vanish on}\:\Manifold{L}},
      \underbrace{
        \ldots, \ldots, \ldots, H_{\kappa_1-1}^{\ell_{\kappa_1-1}}
      }_{%
        \text{not constant on}\:\Manifold{L}%
      }
    ).
  \]
  Take the \(\rho_{\kappa_1 - 1}\) smooth functions \(H_{\kappa_1 - 1}^1,\) \(\ldots,\) \(H_{\kappa_1 - 1}^{\rho_{\kappa_1 - 1}},\) and define the candidate output
  \[
    h \defineas (
      H_{\kappa_1 - 1}^1, \ldots, H_{\kappa_1 - 1}^{\rho_{\kappa_1 - 1}}
    ).
  \]
  Now we show that the system~\eqref{eqn:systemOnM} with output \(h\) yields a vector relative degree of \((\kappa_1,\) \(\ldots,\) \(\kappa_1)\) at \(p_0.\)
  Clearly \(\D h^i \in \langle \Ideal{I}^{(\kappa_1 - 1)}, \D t\rangle^{(\infty)}\) by the characteristic property~\eqref{lemma:2:a}.
  Therefore
  \begin{equation}
    \label{main:base:1}
    \GenerateInline{
      \D h^1,\ldots, \D h^{\rho_{\kappa_1 - 1}}
    }
    \subseteq
    \GenerateInline{
      \Ideal{I}^{(\kappa_1 - 1)}, \D t
    }^{(\infty)}.
  \end{equation}
  Observe that \(\D h^i \in \Ann(\TangentSpaceOf{\Manifold{L}}).\)
  As a result,
  \[
    \SpanInline{\Real}{
      \D h^1_{p_0}, \ldots, \D h^{\rho_{\kappa_1 - 1}}_{p_0}
    }
      \subseteq
        \Ann(\TangentSpaceAt{\Manifold{L}}{p_0}).
  \]
  Using~\eqref{eqn:main:FKappa1} and the characteristic property~\eqref{lemma:2:a},
  \[
    \SpanInline{\Real}{
      \D h^1_{p_0},\ldots, \D h^{\rho_{\kappa_1 - 1}}_{p_0}
    }
    \cap
    \GenerateInline{
      \Ideal{I}^{(\kappa_1)},
      \D t
    }_{p_0}^{(\infty)}
    \subseteq
    \SpanInline{\Real}{\D t_{p_0}}.
  \]
  We already know that the \(h^i\) are smooth functions of the state, so we may conclude
  \[
    \SpanInline{\Real}{
      \D h^1_{p_0},\ldots, \D h^{\rho_{\kappa_1 - 1}}_{p_0}
    }
    \cap
    \GenerateInline{
      \Ideal{I}^{(\kappa_1)},
      \D t
    }_{p_0}^{(\infty)}
    = 
      \{0\}.
  \]
  Finally use Proposition~\ref{prop:invequals} to deduce
  \begin{equation}
    \label{main:base:2}
    \SpanInline{\Real}{
      \D h^1_{p_0},\ldots, \D h^{\rho_{\kappa_1 - 1}}_{p_0}
    }
    \cap
    \SpanInline{\Real}{
      \Codist{I}^{(\kappa_1)}_{p_0},
      \D t_{p_0}
    }
    = 
      \{0\}.
  \end{equation}
  The expressions~\eqref{main:base:1} and~\eqref{main:base:2} are the conditions for Proposition~\ref{prop:relativedegree}.
  Conclude that \(h\) yields a uniform vector relative degree of \((\kappa_1,\) \(\ldots,\) \(\kappa_1)\) at \(p_0.\)
  Define the local regular zero dynamics manifold, possibly shrinking \(\OpenSet{U}\) if necessary,
  \[ 
    \Manifold{Z}^{(\kappa_1)}
      \defineas 
        \{
          x \in \pi(\OpenSet{U})
          \colon
          h(\iota(x)) = \cdots = \Lie_f^{\kappa_1 - 1} h(\iota(x)) = 0
        \},
  \]
  of type \((\rho_{\kappa_1 - 1}, (\kappa_1,\ldots,\kappa_{\rho_{\kappa_1 - 1}}))\) at \(x_0 = \pi(p_0).\)
  Fix \(0 \leq k < \kappa_1 - 1.\)
  \begingroup\blue
  It is easily demonstrated that for any smooth function of the states \(h\) where \(\D h \in \GenerateInline{\Ideal{I}^{(\kappa_1)}, \D t}\) that \(\Lie_f \D h \in \GenerateInline{\Ideal{I}^{(\kappa_1 - 1)}, \D t}\);
  it follows by an application of~\cite[Lemma 2.2.5]{DSouza2022} and~\cite[Lemma 2.2.6]{DSouza2022}.
  This, in combination with~\eqref{main:base:1} implies that
  \endgroup
  \[
    \D h^i,
    \ldots,
    \Lie_f^{\kappa_1 - k - 1} \D h^i
    \in
    \GenerateInline{\Ideal{I}^{(k)}, \D t}^{(\infty)}
    =
    \GenerateInline{ \D F_k^1, \ldots, \D F_k^{\ell_k} },
    \qquad
    1 \leq i \leq \rho_{\kappa_1 - 1}.
  \]
  As a result, we can, without loss of generality, rewrite \(H_k\) to take the form
  \[
    H_k
      =
        (
          \underbrace{
            \overbrace{
              \ldots, \ldots, \vphantom{\Lie_f^{\kappa_1 - 1 - k} h}\ldots, \ldots
            }^{\text{other components}},
            \overbrace{
              h, \ldots, \Lie_f^{\kappa_1 - k - 1} h
            }^{\text{vanish on}\:\Manifold{Z}^{(\kappa_1)}},
            \; t
          }_{%
            \text{vanish on}\:\Manifold{L}%
          },
          \underbrace{
            \ldots,\;\ldots,\;\ldots,\;\ldots
          }_{%
            \text{not constant on}\:\Manifold{L}%
          }
        ),
  \]
  This process adapts all maps \(H_k\) for \(k < \kappa_1 - 1\) so that they explicitly include \(h\) and its Lie derivatives along \(f\) in their components.
  Perform this operation for each \(k\) while ensuring the components of \(H_k\) are subsumed by the components of \(H_{k-1},\) i.e., \(H_k = P_{k-1} \circ H_{k-1},\) up to a reordering.
  We say that the sequence \(H_0, \ldots, H_{\kappa_1}\) is adapted to \(\Manifold{L}\) subordinate to the regular zero dynamics manifold \(\Manifold{Z}^{(\kappa_1)}.\)
  Observe that \(\Manifold{Z}^{(\kappa_1)} \supseteq \Manifold{N}\) since \(\left.h\right|_{\Manifold{N}} = 0.\)
  It is trivially the case that \(\Manifold{Z}^{(\kappa_1)} \subset \Manifold{Z}^{(\kappa_1 + 1)} = \pi(\OpenSet{U}).\)
  The final fact that we simply state is that the only differentials that vanish on \(\Manifold{L}\) at index \(\kappa_1 - 1\) are those that are linearly dependent on the differentials of \(h.\)
  That is,
  \[
    \Ann(\TangentSpaceAt{\Manifold{L}}{p_0})
    \cap
    \SpanInline{\Real}{
      \D H_{\kappa_1 - 1}^1
      \ldots,
      \D H_{\kappa_1 - 1}^{\ell_{\kappa_1 - 1}}
    }
    \subseteq
    \Ann(\TangentSpaceAt{\Manifold{L}^{(\kappa_1)}}{p_0}),
  \]
  where \(\Manifold{L}^{(\kappa_1)}\) is the lift of \(\Manifold{Z}^{(\kappa_1)}.\)
  This completes the base case.
  Suppose, by way of induction, that, for some \(2 \leq k \leq \kappa_1,\)
  \begin{enumerate}[label={(H.\arabic*)}]
    \item{
      \label{main:induction:1}
      \(\Manifold{Z}^{(k)}\) is a regular zero dynamics manifold of type \((\rho_{k-1}, (\kappa_1, \ldots, \kappa_{\rho_{k-1}}))\) at \(x_0\) satisfying
      \(
        \Manifold{Z}^{(k + 1)} \supseteq \Manifold{Z}^{(k)} \supseteq \Manifold{N},
      \)
    }
    \item{
      \label{main:induction:2}
      there exists a smooth function \(h: \OpenSet{U} \to \Real^{\rho_{k-1}}\) so that system~\eqref{eqn:system} with output \(h\) yields a vector relative degree \((\kappa_1,\) \(\ldots,\) \(\kappa_{\rho_{k-1}})\) at \(x_0\) and the zero dynamics coincide locally with \(\Manifold{Z}^{(k)},\)
    }
    \item{
      \label{main:induction:3}
      all maps \(H_0, \ldots, H_{n - n^*}\) are adapted to \(\Manifold{L}\) subordinate to the regular zero dynamics manifold \(\Manifold{Z}^{(k)}\) using output \(h\) and,
    }
    \item{
      \label{main:induction:4}
      denoting \(\Manifold{L}^{(k)}\) as the lift of \(\Manifold{Z}^{(k)}\) we have that
      \begin{equation}
        \label{eqn:main:inductive:2}
        \Ann(\TangentSpaceAt{\Manifold{L}}{p_0})
        \cap
        \SpanInline{\Real}{
          \D H_{k-1}^1,
          \ldots,
          \D H_{k-1}^{\ell_{k-1}}
        }
        \subseteq
        \Ann(\TangentSpaceAt{\Manifold{L}^{(k)}}{p_0}),
      \end{equation}
    }
  \end{enumerate}
  The goal of this induction is to construct new regular zero dynamics manifold \(\Manifold{Z}^{(k-1)}\) that satisfies
  \begin{enumerate}[label={(C.\arabic*)}]
    \item{
      \label{main:induction:1c}
      \(\Manifold{Z}^{(k-1)}\) is a regular zero dynamics manifold of type \((\rho_{k-2}, (\kappa_1, \ldots, \kappa_{\rho_{k-2}}))\) at \(x_0\) satisfying
      \(
        \Manifold{Z}^{(k)} \supseteq \Manifold{Z}^{(k - 1)} \supseteq \Manifold{N},
      \)
    }
    \item{
      \label{main:induction:2c}
      there exists a smooth function \(h': \OpenSet{U} \to \Real^{\rho_{k-2}}\) so that system~\eqref{eqn:system} with output \(h'\) yields a vector relative degree \((\kappa_1,\) \(\ldots,\) \(\kappa_{\rho_{k-2}})\) at \(x_0\) and the zero dynamics coincide locally with \(\Manifold{Z}^{(k-1)},\)
    }
    \item{
      \label{main:induction:3c}
      all maps \(H_0, \ldots, H_{n - n^*}\) are adapted to \(\Manifold{L}\) subordinate to the regular zero dynamics manifold \(\Manifold{Z}^{(k-1)}\) using output \(h'\) and,
    }
    \item{
      \label{main:induction:4c}
      denoting \(\Manifold{L}^{(k-1)}\) as the lift of \(\Manifold{Z}^{(k-1)}\) we have that
      \begin{equation}
        \label{eqn:main:induction:4c}
        \Ann(\TangentSpaceAt{\Manifold{L}}{p_0})
        \cap
        \SpanInline{\Real}{
          \D H_{k-2}^1,
          \ldots,
          \D H_{k-2}^{\ell_{k-2}}
        }
        \subseteq
        \Ann(\TangentSpaceAt{\Manifold{L}^{(k-1)}}{p_0}),
      \end{equation}
    }
  \end{enumerate}
  Consider the map \(H_{k-2}\) whose component differentials generate the differential ideal
  \(
    \langle \Ideal{I}^{(k - 2)}, \D t\rangle^{(\infty)}.
  \)
  By~\ref{main:induction:3} of the inductive hypothesis, \(H_{k-2}\) takes the form
  \[
    H_{k-2}
      =
      (
        \underbrace{
          \overbrace{
            H_{k-2}^1, \ldots, \ldots, H_{k-2}^{\rho_{k-2} - \rho_{k-1}}
          }^{\text{other components}},
          \overbrace{
            h^1, \ldots, \Lie_f^{\kappa_{\rho_{k-1}} - k + 1} h^{\rho_{k-1}}
          }^{\text{vanish on}\:\Manifold{Z}^{(k)}},
          \; t
        }_{%
          \text{vanish on}\:\Manifold{L}%
        },
        \underbrace{
          \ldots,\;\ldots,\;\ldots
        }_{%
          \text{not constant on}\:\Manifold{L}%
        }
      ).
  \]
  There are now two cases.
  If \(\rho_{k-2} = \rho_{k-1},\) then the number of ``other components'' that vanish on \(\Manifold{L}\) is zero.
  This is because, due to the vector relative degree of \(h,\) the \(\rho_{k-1}\) new components 
  \(
    \Lie_f^{\kappa_{1} - k + 1} h^1, \ldots, \Lie_f^{\kappa_{\rho_{k-1}} - k + 1} h^{\rho_{k-1}},
  \)
  appear in \(F_{k-2}.\)
  In this case, set \(\Manifold{Z}^{(k-1)} = \Manifold{Z}^{(k)}\) which remains a regular zero dynamics manifold of type \((\rho_{k-2}, (\kappa_1, \ldots, \kappa_{\rho_{k-2}}))\) establishing~\ref{main:induction:1c}.
  The rest of the inductive properties~\ref{main:induction:2c}--\ref{main:induction:4c} follow directly from~\ref{main:induction:2}--\ref{main:induction:4} by not changing the output \(h' \defineas h.\)

  Alternatively, \(\rho_{k-2} > \rho_{k-1}.\)
  In this case, there exists precisely \(\mu \defineas \rho_{k-2} - \rho_{k-1}\) ``new'' components whose differentials annihilate tangent vectors to \(\Manifold{L}\):
  these are the first components that are differentially independent of the Lie derivatives of \(h\) yet vanish on \(\Manifold{L}.\)
  Take these new component functions, up to a reordering, to be the leading components
  \(
    H_{k-2}^{1}, \ldots, H_{k-2}^{\mu},
  \)
  and define the output
  \[
    q \defineas (
      H_{k-2}^{1}, \ldots, H_{k-2}^{\mu}
    ).
  \]
  We now show that the system~\eqref{eqn:systemOnM} with candidate output \(h' \defineas (h, q)\) yields a well-defined vector relative degree of \((\kappa_1,\) \(\ldots,\) \(\kappa_{\rho_{k-2}})\) at \(p_0.\)
  Since the \(q^i\) are component functions of \(H_{k-2}\) we have by the characteristic property~\eqref{lemma:2:a} that
  \[
    \GenerateInline{
      \D q^1,
      \ldots,
      \D q^{\mu}
    }
    \subseteq
    \langle \Ideal{I}^{(k - 2)}, \D t\rangle^{(\infty)}.
  \]
  We also know from~\ref{main:induction:2} of the inductive hypothesis that \(h^i\) yields a relative degree of \(\kappa_i\) so the \(j\)th Lie derivative of \(h\) along \(f\) yields a relative degree as well of \(\kappa_i - j,\) for \(0 \leq j \leq \kappa_i - 1.\)
  Invoke Proposition~\ref{prop:relativedegree} to find
  \[
    \GenerateInline{
      \Lie_f^{\kappa_1 - k + 1} \D h^1,
      \ldots,
      \Lie_f^{\kappa_{\rho_{k-1}} - k + 1} \D h^{\rho_{k-1}}
    }
    \subseteq
    \GenerateInline{\Ideal{I}^{(k - 2)}, \D t}^{(\infty)}.
  \]
  Combine these data to find
  \begin{equation}
    \label{eqn:main:3}
    \GenerateInline{
      \D q^1,
      \ldots,
      \D q^{\mu},
      \Lie_f^{\kappa_1 - k + 1} \D h^1,
      \ldots,
      \Lie_f^{\kappa_{\rho_{k-1}} - k + 1} \D h^{\rho_{k-1}}
    }
    \subseteq
    \GenerateInline{\Ideal{I}^{(k - 2)}, \D t}^{(\infty)}.
  \end{equation}
  Putting that aside, invoke Proposition~\ref{prop:relativedegree} once again to find
  \[
    \SpanInline{\Real}{
      \Lie_f^{\kappa_1 - k + 1} \D h^1_{p_0},
      \ldots,
      \Lie_f^{\kappa_{\rho_{k-1}} - k + 1} \D h^{\rho_{k-1}}_{p_0}
    }
    \cap
    \SpanInline{\Real}{
      \Codist{I}^{(k - 1)}_{p_0},
      \D t_{p_0}
    }
    =
      \{ 0 \}.
  \]
  By~\ref{main:induction:4} of the inductive hypothesis,
  \[
    \SpanInline{\Real}{
      \D q^1_{p_0}
      \ldots,
      \D q^{\mu}_{p_0}
    }
    \cap
    \GenerateInline{
      \D H_{k-1}^1,\ldots,\D H_{k-1}^{\ell_{k-1}}
    }
    =
      \{ 0 \},
  \]
  since the \(\D q^i \in \Ann(\TangentSpaceAt{\Manifold{L}}{p_0})\) but \(\D q^i \notin \Ann(\TangentSpaceAt{\Manifold{L}^{(k)}}{p_0}).\)
  Combine these data to conjecture that
  \[
    \SpanInline{\Real}{
      \D q^1_{p_0},
      \ldots,
      \D q^{\mu}_{p_0},
      \Lie_f^{\kappa_1 - k + 1} \D h^1_{p_0},
      \ldots,
      \Lie_f^{\kappa_{\rho_{k-1}} - k + 1} \D h^{\rho_{k-1}}_{p_0}
    }
  \]
  has a trivial intersection with
  \(
    \SpanWord\{
      \Codist{I}^{(k-1)}_{p_0},
      \D t_{p_0}
    \}.
  \)
  Suppose, in search of a contradiction, there is a linear combination
  \[
    \textstyle\sum_{i = 1}^\mu a_i\, \D q^i_{p_0}
    +
    \textstyle\sum_{i = 1}^{\rho_{k-1}} b_i\, \Lie_f^{\kappa_{i} - k + 1} \D h^i_{p_0}
    \in
    \SpanInline{\Real}{
      \Codist{I}^{(k - 1)}_{p_0},
      \D t_{p_0}
    }.
  \]
  If there is an \(a_i\neq 0,\) then this form does not live in \(\Ann(\TangentSpaceAt{\Manifold{L}^{(k)}}{p_0})\) which contradicts~\ref{main:induction:4} of the inductive hypothesis.
  Therefore \(a_i = 0\) for all \(1 \leq i \leq \mu.\) 
  We now show that \(b_i = 0\) for all \(1 \leq i \leq \rho_{k-1}.\)
  Suppose
  \[
    \textstyle\sum_{i = 1}^{\rho_{k-1}} b_i\, \Lie_f^{\kappa_{i} - k + 1} \D h^i_{p_0}
    \in
    \SpanInline{\Real}{
      \Codist{I}^{(k-1)}_{p_0},
      \D t_{p_0}
    },
  \]
  Then, by Proposition~\ref{prop:relativedegree}, the system~\eqref{eqn:systemOnM} with output
  \[
    (\Lie_f^{\kappa_1 - k + 1} h^1, \ldots, \Lie_f^{\kappa_{\rho_{k-1}} - k + 1} h^{\rho_{k-1}})
  \]
  does \emph{not} yield a uniform vector relative degree at \(p_0.\)
  This immediately contradicts the well-defined vector relative degree for \(h.\)
  Therefore \(b_i = 0\) for all \(1 \leq i \leq \rho_{k-1}.\)
  As a result, conclude that system~\eqref{eqn:systemOnM} with output \(h' = (h, q)\) yields a vector relative degree at \(p_0.\)
  The vector relative degree must be \((\kappa_1,\) \(\ldots,\) \(\kappa_{\rho_{k-2}}).\)
  This demonstrates~\ref{main:induction:2c}.
  Define the local regular zero dynamics manifold
  \[ 
    \Manifold{Z}^{(k-1)}
      \defineas 
        \{
          x \in \Manifold{Z}^{(k)}
          \colon
          q(\iota(x)) = \cdots = \Lie_f^{\kappa_{\rho_{k-2}} - 1} q(\iota(x)) = 0
        \},
  \]
  of type \((\rho_{k-2}, (\kappa_1,\) \(\ldots,\) \(\kappa_{\rho_{k-2}})).\)
  By construction \(\Manifold{Z}^{(k-1)} \subset \Manifold{Z}^{(k)}\) and, since \(\left.q\right|_\Manifold{N} = 0,\) \(\Manifold{Z}^{(k-1)} \supseteq \Manifold{N}.\)
  This establishes~\ref{main:induction:1c}.
  To establish~\ref{main:induction:3c}, we adapt, exactly as in the base case, the maps \(H_0,\) \(\ldots,\) \(H_{n - n^*}\) to \(\Manifold{L}\) subordinate to \(\Manifold{Z}^{(k-1)}\) so that all the Lie derivatives of \(h' = (h, q)\) appear explicitly.
  It remains to show~\ref{main:induction:4c}.
  First observe that the components of
  \[
    H_{k-2}
      =
      (
        \underbrace{
          \overbrace{
            q^1, \ldots, \ldots, q^{\mu}
          }^{\text{other components}},
          \overbrace{
            h^1, \ldots, \Lie_f^{\kappa_{\rho_{k-1}} - k + 1} h^{\rho_{k-1}}
          }^{\text{vanish on}\:\Manifold{Z}^{(k)}},
          \; t
        }_{%
          \text{vanish on}\:\Manifold{L},\Manifold{Z}^{(k-1)}%
        },
        \underbrace{
          \ldots,\;\ldots,\;\ldots
        }_{%
          \text{not constant on}\:\Manifold{L}%
        }
      ),
  \]
  that vanish on \(\Manifold{L}\) constitute a component of \(h,\) a Lie derivative of \(h,\) or \(q.\)
  It follows that, using the characteristic property~\ref{lemma:2:a},
  \[
    \Ann(\TangentSpaceAt{\Manifold{L}}{p})
    \cap
    \GenerateInline{
      \Ideal{I}^{(k-2)},
      \D t
    }^{(\infty)}
      \subseteq
        \Ann(\TangentSpaceAt{\Manifold{L}^{(k-1)}}{p}).
  \]
  Use~\eqref{involutivity} with Proposition~\ref{prop:invequals} to conclude that~\ref{main:induction:4c} holds.
  This completes the induction.

  The inductive algorithm proceeds until the regular zero dynamics manifold \(\Manifold{Z}^{(1)}\) of type \((\rho_0, (\kappa_1, \ldots, \kappa_{\rho_0}))\) is produced at step \(k = 2.\)
  By Lemma~\ref{lemma:1}, \(\Manifold{Z}^{(1)}\) is an \(n^*\)-dimensional submanifold with codimension \(n - n^*.\)
  It contains the \(n^*\)-dimensional submanifold \(\Manifold{N}\) and so \(\Manifold{Z}^{(1)} = \Manifold{N}\) is a regular zero dynamics manifold of type \((\rho_0,\) \((\kappa_1,\) \(\ldots,\) \(\kappa_{\rho_0})).\)
  Theorem~\ref{thm:LTFL-TowerControlled} implies that \(\Manifold{N}\) is transverse feedback linearizable at \(x_0.\)
  A by-product of this algorithm is that the final output \(h\) is the transverse output.
\end{proof}
\begingroup\blue
\subsection{Simplifying The Algorithm}
\label{sec:thealgorithm}
The algorithm used in the proof of Section~\ref{sec:mainproof} inspires a shortened, but equivalent, algorithm that produces a transverse output under the conditions for TFL --- \eqref{controllability},~\eqref{involutivity},~\eqref{constantdim}.
The procedure is presented in Algorithm~\ref{alg:tfl}.
\begin{algorithm}
\begin{algorithmic}[1]
  \Procedure{TFL Procedure}{$f, g, \Manifold{N}, x_0, u_*$}
    \State{%
      Compute \(\rho_0(p_0), \ldots, \rho_{n - n^*}(p_0)\)%
    }
    \label{alg:tfl:rho}
    \Comment{as in~\eqref{eqn:controlindex}}
    \State{%
      Compute \(\kappa_1(p_0), \ldots, \kappa_{m}(p_0)\)%
    } \Comment{as in~\eqref{eqn:controlindex:k}}
    \State{%
      \(\Manifold{Z}^{(\kappa_1 + 1)} \gets \Real^n\)
    }
    \State{%
      Initialize \(h \gets (\phantom{x^1})\)%
    }
    \For{\(k \gets \kappa_1, \ldots, 1\)}
      \If{\(\rho_{k - 1} = \rho_k\)}
        \State{%
          \(\Manifold{Z}^{(k)} \gets \Manifold{Z}^{(k+1)}\)%
        }
        \State{\textbf{continue}}
      \EndIf
      \State{%
        \(\mu \gets \rho_{k-1} - \rho_k\)
      }
      \State{%
        Construct \(H_{k-1}\) to satisfy~\eqref{lemma:2:a}%
      } \Comment{integrate \(\langle \Ideal{I}^{(k-1)}, \D t\rangle^{(\infty)}\)}
      \label{alg:tfl:integrate}
      \State{%
        Rewrite \(H_{k-1},\) while preserving~\eqref{lemma:2:a}, so that
        \[
          H_{k-1}
            =
              (
                \underbrace{
                  H_{k-1}^1, 
                  \ldots,
                  H_{k-1}^{\mu},
                  h^1,
                  \ldots,
                  \ldots,
                  \Lie_f^{\kappa_{\rho_k} - k} h^{\rho_{k}},
                  t
                }_{\text{vanish on}\:\Manifold{L}},
                \ldots
              ).
        \]%
      }
      \label{alg:tfl:adapt}
      \State{%
        \(h \gets (h, H_{k-1}^1, \ldots, H_{k-1}^{\mu})\)%
      }
      \State{%
        \(\Manifold{Z}^{(k)} \gets \text{zero dynamics of}\:h.\)%
      }
    \EndFor
  \EndProcedure
\end{algorithmic}
\caption{The Transverse Feedback Linearization algorithm.}
\label{alg:tfl}
\end{algorithm}
One difference from the proof is that integration only happens at iterations where \(\rho_{k-1}\) differs from \(\rho_k.\)
These are indices corresponding to the \emph{distinct} transverse controllability indices.
Another difference is the lack of re-adaptation of all the maps \(H_0,\) \(\ldots,\) \(H_{n-n^*}\) throughout the algorithm.
In fact, the vast majority of the maps \(H_k\) are not constructed.
This is \emph{not} an oversight.
The adaptation process is embedded in Line~\ref{alg:tfl:adapt} where \(H_{k-1}\) is adapted to have the known output \(h\) and its Lie derivatives appear explicitly.
This alongside the fact that the algorithm runs from larger to smaller indices ensure an appropriately adapted basis is constructed.
\endgroup

\section{Example: Performing the Algorithm}
\label{sec:example:algorithm}
Having established the algorithm, let us return to the example in Section~\ref{sec:example:conditions}.
We already verified the conditions for TFL hold.
As a result, we can execute Algorithm~\ref{alg:tfl}.
We start by computing the indices \(\rho\) and \(\kappa\) as required by Line~\ref{alg:tfl:rho}.
Using~\eqref{eqn:example:flag}, deduce
\[
  \rho_0(p_0) = 2,\quad
  \rho_1(p_0) = 2,\quad
  \rho_2(p_0) = 1,\quad
  \rho_3(p_0) = 0,
\]
and,
\[
  \kappa_1(p_0) = 3,\quad
  \kappa_2(p_0) = 2.
\]
Let
\(
  \OpenSet{U}
    \defineas
      \{
        (t, u, x) \in \Manifold{M}
        \colon
        x^1 > 0,
        x^3 > -3
      \}
\)
and set \(\Manifold{Z}^{(\kappa_1 + 1)}\) \(=\) \(\Manifold{Z}^{(4)}\) \(\defineas\) \(\pi(\OpenSet{U}).\)
The ideals are simply, finitely, non-degenerately generated on \(\OpenSet{U}\) and it can be verified that the conditions~\eqref{constantdim} and~\eqref{involutivity} hold over \(\OpenSet{U}.\)

The iteration begins at \(k = \kappa_1 = 3.\)
Observe that \(\rho_{2} > \rho_{3}.\)
It follows that we expect to find \(\mu = \rho_{2} - \rho_3 = 1\) new scalar output that will yield a relative degree of \(3\) at \(p_0\) and is constant on \(\Manifold{L}.\)
We proceed by integrating the differential ideal \(\langle \Ideal{I}^{(2)}, \D t\rangle^{(\infty)}\) to find the map \(H_2 = (x^5 + x^7, t).\)
Right away we see that the new component that is constant (i.e. vanishes) on \(\Manifold{L}\) is the first component and so we define our candidate partial transverse output \(h \defineas x^5 + x^7.\)
We then define the local regular zero dynamics manifold \(\Manifold{Z}^{(3)}\) to be the zero dynamics of system~\eqref{eqn:example:1} with output \(h.\)
Explicitly,
\[
  \Manifold{Z}^{(3)}
    \defineas
      \left\{
        x \in \Real^n
        \colon 
        x^5 + x^7
        =
        x^5 + x^6
        =
        -x^3\, x^5 + 2\, x^6 + x^7
        =
        0
      \right\}.
\]

The next iteration of the algorithm looks at index \(k = 2.\)
Here, we again note that \(\rho_{1} > \rho_2\) and \(\mu = 1.\)
We expect to see one new scalar output with relative degree \(2\) at \(p_0\) that is constant on \(\Manifold{L}.\)
Integrate \(\langle \Ideal{I}^{(1)}, \D t\rangle^{(\infty)}\) to find
\[
  H_1 =
  (
    \overbrace{
      x^5 + x^7,
      x^5 + x^6
    }^{h,\,\Lie_f h},
    \frac{1}{2}(x^1)^2 + x^2\, x^7 - 2,
    x^2,
    x^3\, e^{-x^4} - 4,
    t
  ),
\]
where we highlight the fact that, as expected, the known output \(h\) and its Lie derivative \(\Lie_f h\) appear explicitly in the first two components.
Unfortunately, it is not obvious (at first glance) what the new scalar output is since none of the other components are constant on \(\Manifold{L}\) besides the trivial \(t\) component.
However, Lemma~\ref{lem:adaptToL} states that a rewriting for \(H_1\) where four components vanish on \(\Manifold{L}\) is possible.
The simplest strategy is to restrict \(H_1\) to \(\Manifold{L}\) and eliminate the effect of the coordinates algebraically.
{\blue We perform this to find that the algebraic combination \(2\, H_1^3 + (H_1^4)^2 - H_1^5\) of components of \(H_1\) vanish on \(\Manifold{L}.\)}
Using this, rewrite \(H_1\) as
\[
  H_1 =
  (
    \underbrace{
      (x^1)^2 + (x^2)^2 + 2\, x^2\, x^7 - x^3\, e^{-x^4},
      \overbrace{
        x^5 + x^7,
        x^5 + x^6
      }^{h,\,\Lie_f h},
      t
    }_{\text{vanish on}\:\Manifold{L}},
    \underbrace{
      x^2,
      x^3\, e^{-x^4} - 4
    }_{\text{not constant on}\:\Manifold{L}}
  ).
\]
Define the new candidate output \(h \defineas (x^5 + x^7, {H}_1^1)\) and the induced local, regular zero dynamics manifold, shrinking \(\OpenSet{U}\) as necessary,
\[
  \Manifold{Z}^{(2)}
    \defineas
      \{
        x \in \Manifold{Z}^{(1)}
        \colon
        H_1^1(x) = \Lie_f H_1^1(x) = 0
      \}.
\]

The final iteration of the algorithm at index \(k = 1\) is skipped since \(\rho_0 = \rho_1.\)
Set \(\Manifold{Z}^{(1)} \defineas \Manifold{Z}^{(2)}.\)
The algorithm asserts that \(\Manifold{Z}^{(1)} = \Manifold{N}\) locally.
The transverse output is
\[
  h(x) =
    \begin{bmatrix}
      x^5 + x^7\\
      (x^1)^2 + (x^2)^2 + 2\, x^2\, x^7 - x^3\, e^{-x^4}
    \end{bmatrix},
\]
and it is a regular matter to verify that the system~\eqref{eqn:example:1} with output \(h\) yields a vector relative degree of \((3, 2)\) at \(x_0\) while locally vanishing on \(\Manifold{N}.\)

\section{Conclusion}
We presented an algorithm that, subject to dual conditions for transverse feedback linearization, constructs a transverse output that can be used to put a nonlinear control system into TFL normal form.
The algorithm provides a geometric take on an otherwise algebraic process by viewing adaptation as the subsumption of level sets of a sequence of smooth maps.
Our algorithm subsumes the GS and Blended algorithms for exact state-space feedback linearization in the case where \(\Manifold{N} = \{ x_0 \}\) and therefore gives a geometric perspective on their algorithms as the construction of a descending sequence of regular zero dynamics manifolds.
The structure of the presented algorithm suggests an avenue for future research wherein a variation produces a descending sequence of regular zero dynamics manifolds that terminate on the zero dynamics manifold of the largest feedback linearizable subsystem.

\bibliography{mimo-tfl}

\end{document}